\def\eqref#1{$(\ref{#1})$}
\newcommand{\dist}{\mathop{\mathrm{dist}}}
\newenvironment{proof}{\noindent {\em {Proof}}.}{$\square$
\medskip}
\newtheorem{theorem}{Theorem}[section]
\newtheorem{corollary}[theorem]{Corollary}
\newtheorem{lemma}[theorem]{Lemma}
\newtheorem{remark}[theorem]{Remark}
\newtheorem{proposition}[theorem]{Proposition}
\newtheorem{definition}[theorem]{Definition}
\newtheorem{example}[theorem]{Example}
\newtheorem{question}[theorem]{Question}
\newtheorem{problem}[theorem]{Problem}
\newcommand{\bd}[1]{\begin{definition}\rm\label{#1}}
\newcommand{\bt}[1]{\begin{theorem}\label{#1}}
\newcommand{\bc}[1]{\begin{corollary}\label{#1}}
\newcommand{\bl}[1]{\begin{lemma}\label{#1}}
\newcommand{\bp}[1]{\begin{proposition}\label{#1}}
\newcommand{\be}[1]{\begin{example}\rm\label{#1}}
\newcommand{\bq}[1]{\begin{question}\rm\label{#1}}
\newcommand{\bprob}[1]{\begin{problem}\rm\label{#1}}
\newcommand{\beq}[1]{\begin{eqnarray}\label{#1}}
\newcommand{\br}[1]{\begin{remark}\rm\label{#1}}
\newcommand{\el}{\end{lemma}}
\newcommand{\ep}{\end{proposition}}
\newcommand{\ee}{\end{example}}
\newcommand{\eq}{\end{question}}
\newcommand{\eprob}{\end{problem}}
\newcommand{\eeq}{\end{eqnarray}}
\newcommand{\ed}{\end{definition}}
\newcommand{\et}{\end{theorem}}
\newcommand{\ec}{\end{corollary}}
\newcommand{\er}{\end{remark}}
\title{\bf Tripartite coincidence-best proximity points in generalized metric spaces}
\author {{\sc\textbf{M. Norouzian}} and~{\sc\textbf{A. Abkar}}\footnote{corresponding author} \\
[0.3cm]
Department of Pure Mathemathics, Faculty of Science, \\
Imam Khomeini International University,
 Qazvin 34149, Iran\\
norouzian.m67@gmail.com;\, abkar@sci.ikiu.ac.ir\\
{\em}}
\date{}
\begin{document}
\maketitle \begin{abstract}
 We first introduce a notion of convex structure in generalized metric spaces, then we introduce tripartite contractions, tripartite semi-contractions, tripartite coincidence points, as well as tripartite best proximity points for a given triple $(K;S;T)$ defined on the union $A\cup B\cup C$ of closed subsets of a generalized metric space. We prove theorems on the existence and convergence of tripartite coincidence-best proximity points. 
\end{abstract}

\textbf{Keywords}: Coincidence point; best proximity point; cyclic contraction; noncyclic contraction; $G$-metric space;
uniformly convex $G$-metric space. \\

\textbf{2010 Mathematics Subject Classification}: 47H10, 47H09, 54H25 \\
\textbf{}{}
\section{{Introduction}}
Let $(X,d)$ be a metric space, and let $A,B$ be subsets of $X$. A mapping $T:A\cup B\rightarrow A\cup B$ is said to be \textit{cyclic}
provided that $T(A)\subseteq B$ and $T(B)\subseteq A$; similarly, a mapping $S:A\cup B\rightarrow A\cup B$ is said to be \textit{noncyclic}
if $S(A)\subseteq A$ and $S(B)\subseteq B$. The following theorem is an extension of Banach contraction principle.
\begin{theorem}(\cite{Kirk})
Let $A$ and $B$ be nonempty closed subsets of a complete metric space $(X,d)$. Suppose that $T$ is a cyclic mapping such that
\begin{equation*}
d(Tx,Ty)\leq \alpha\, d(x,y),
\end{equation*}
for some $\alpha \in (0,1)$ and for all $x\in A, ~y\in B$. Then $T$ has a unique fixed point in $A\cap B$.
\end{theorem}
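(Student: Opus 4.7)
The plan is to mimic the classical Banach iteration argument while respecting the cyclic partition. Starting from any $x_0 \in A$, I would define $x_{n+1}=Tx_n$; by cyclicity, the even-indexed iterates lie in $A$ and the odd-indexed ones in $B$. Since any two consecutive iterates sit in opposite components, the contraction hypothesis applies to each consecutive pair and telescopes to $d(x_n,x_{n+1})\le \alpha^n d(x_0,x_1)$.

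Next, I would establish that $\{x_n\}$ is Cauchy by summing consecutive distances via the triangle inequality and bounding with a geometric series. Completeness then gives a limit $x^\ast\in X$. Because the even subsequence lies in the closed set $A$ and the odd subsequence lies in the closed set $B$, both converging to $x^\ast$, one concludes $x^\ast\in A\cap B$.

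To show $x^\ast$ is a fixed point, I would exploit that $x^\ast\in A\cap B$: for each $n$ the point $x_n$ lies in a component opposite to one of the copies of $x^\ast$, so the contraction yields $d(x_{n+1},Tx^\ast)=d(Tx_n,Tx^\ast)\le \alpha\, d(x_n,x^\ast)\to 0$; combined with $x_{n+1}\to x^\ast$, this forces $Tx^\ast=x^\ast$. Uniqueness follows similarly: a second fixed point $y^\ast$ must again satisfy $y^\ast=Ty^\ast\in A\cap B$, whence viewing one copy in $A$ and the other in $B$ gives $d(x^\ast,y^\ast)=d(Tx^\ast,Ty^\ast)\le \alpha\,d(x^\ast,y^\ast)$, forcing equality.

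The only mild obstacle is that the contraction inequality is available only on cross-pairs from $A\times B$, so one cannot contract $d(x_n,x_{n+2})$ directly; the Cauchy estimate must be obtained by summing consecutive terms rather than by a one-step contraction. Once this bookkeeping is respected, the argument is otherwise a direct transcription of the Banach fixed-point proof.
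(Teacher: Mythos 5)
Your argument is correct, and it is essentially the classical Banach-iteration proof of this cyclic contraction theorem; the paper itself only quotes the result from the reference of Kirk, Srinivasan and Veeramani without reproducing a proof, and your handling of the cross-pair restriction (summing consecutive distances for the Cauchy estimate, and using $x^\ast\in A\cap B$ so that the contraction can be applied to the pairs $(x_n,x^\ast)$ and $(x^\ast,y^\ast)$) is exactly the right bookkeeping. No gaps to report.
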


Let $A$ and $B$ be nonempty subsets of a metric space $X$. A mapping $T:A\cup B\rightarrow A\cup B$ is said to be a \textit{cyclic contraction}
if $T$ is cyclic and
\begin{equation*}
d(Tx,Ty)\leq \alpha\, d(x,y)+(1-\alpha)\dist(A,B)
\end{equation*}
for some $\alpha \in (0,1)$ and for all $x\in A, ~y\in B$, where $$\dist(A,B):=\inf \{d(x,y):(x,y)\in A\times B\}.$$

For a cyclic mapping $T:A\cup B\rightarrow A\cup B$, a point $x\in A\cup B$ is
said to be a \textit{best proximity point} provided that
\begin{equation*}
d(x,Tx)=\mathrm{dist}(A,B).
\end{equation*}
The following existence, uniqueness and convergence result of a best proximity point for cyclic contractions is the main result of
\cite{Eldred}.
\begin{theorem}(\cite{Eldred})
Let $A$ and $B$ be nonempty closed convex subsets of a uniformly convex Banach space $X$ and let $T:A\cup B\rightarrow A\cup B$ be
a cyclic contraction mapping. For $x_{0}\in A$, define $x_{n+1}:=Tx_{n}$ for each $n\geq 0$. Then there exists a unique $x\in A$ such
that $x_{2n}\rightarrow x$ and
\begin{equation*}
\|x-Tx\|=\mathrm{dist}(A,B).
\end{equation*}
\end{theorem}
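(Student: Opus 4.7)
The plan is the classical uniform-convexity approach. First, I would iterate the cyclic contraction on $d_n:=\|x_n-x_{n+1}\|$ to obtain $d_{n+1}\le\alpha d_n+(1-\alpha)\dist(A,B)$; telescoping yields $d_n-\dist(A,B)\le\alpha^n(d_0-\dist(A,B))$, which together with the trivial lower bound $d_n\ge\dist(A,B)$ gives $\|x_n-x_{n+1}\|\to\dist(A,B)$.

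Second, I would establish the key uniform-convexity lemma: if $\{u_n\},\{v_n\}\subset A$ and $\{w_n\}\subset B$ satisfy $\|u_n-w_n\|\to\dist(A,B)$ and $\|v_n-w_n\|\to\dist(A,B)$, then $\|u_n-v_n\|\to 0$. The midpoint $\tfrac{u_n+v_n}{2}$ lies in $A$ by convexity, so $\|\tfrac{u_n+v_n}{2}-w_n\|\ge\dist(A,B)$; after rescaling by $\max(\|u_n-w_n\|,\|v_n-w_n\|)$ and invoking the modulus of uniform convexity of $X$, this pinching forces $\|u_n-v_n\|\to 0$. Applying the lemma with $u_n=x_{2n}$, $v_n=x_{2n+2}$, $w_n=x_{2n+1}$ yields $\|x_{2n}-x_{2n+2}\|\to 0$.

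The main obstacle is to upgrade this ``consecutive even-indexed terms close'' statement to full Cauchyness of $\{x_{2n}\}$. I would argue by contradiction: assuming there exist $\varepsilon>0$ and indices $m_k>n_k$ with $\|x_{2m_k}-x_{2n_k}\|\ge\varepsilon$, take $m_k$ minimal so that $\|x_{2m_k-2}-x_{2n_k}\|<\varepsilon$; then $\|x_{2m_k-2}-x_{2m_k}\|\to 0$ pins $\|x_{2m_k}-x_{2n_k}\|\to\varepsilon$. Successive applications of the cyclic contraction to the cyclic pairs $(x_{2m_k},x_{2n_k-1})$ and $(x_{2m_k+1},x_{2n_k})$ control $\|x_{2m_k+2}-x_{2n_k+1}\|$ in terms of $\|x_{2m_k}-x_{2n_k-1}\|$; a second invocation of the uniform-convexity lemma, this time pinning midpoints of odd-indexed terms in $B$ against a fixed point of $A$, then conflicts with the pinning on $\|x_{2m_k}-x_{2n_k}\|$. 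Choosing the index shifts so that both the cyclic contraction and the uniform-convexity lemma are simultaneously applicable is where most of the technical work lies.

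Finally, with $\{x_{2n}\}$ Cauchy and $x:=\lim x_{2n}\in A$ (by closedness), the symmetric argument applied to the odd indices yields $y:=\lim x_{2n+1}\in B$ with $\|x-y\|=\dist(A,B)$. Applying the cyclic contraction to the pair $(x,x_{2n-1})$ gives $\|Tx-x_{2n}\|\le\alpha\|x-x_{2n-1}\|+(1-\alpha)\dist(A,B)$, whose right-hand side tends to $\dist(A,B)$; combined with the lower bound $\|Tx-x_{2n}\|\ge\dist(A,B)$ and the convergence $x_{2n}\to x$, this forces $\|Tx-x\|=\dist(A,B)$. Uniqueness of $x$ as the limit of a convergent sequence is automatic.
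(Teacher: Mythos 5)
The paper itself states this theorem only as background, citing \cite{Eldred} without reproducing a proof, so the benchmark is the standard Eldred--Veeramani argument, and your outline follows its general shape. Steps 1 and 2 and the final passage to the best proximity point are correct: consecutive iterates always form an $A\times B$ pair, so $d_{n+1}\le\alpha d_n+(1-\alpha)\dist(A,B)$ is legitimate, and the convexity lemma applied with $u_n=x_{2n}$, $v_n=x_{2n+2}$, $w_n=x_{2n+1}$ gives $\|x_{2n}-x_{2n+2}\|\to 0$ (only note that when $\dist(A,B)=0$ the lemma follows from the triangle inequality rather than from the rescaling argument).

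The genuine gap is step 3, the Cauchyness of $(x_{2n})$, which is the heart of the proof and which you acknowledge you have not carried out. Your contradiction setup (minimal $m_k$) yields $\|x_{2m_k}-x_{2n_k}\|\to\varepsilon$, but to contradict this via the uniform-convexity lemma you must exhibit a common sequence $w_k\in B$ with $\|x_{2m_k}-w_k\|\to\dist(A,B)$ and $\|x_{2n_k}-w_k\|\to\dist(A,B)$; the only available candidate is $w_k=x_{2n_k+1}$, and the needed estimate $\|x_{2m_k}-x_{2n_k+1}\|\to\dist(A,B)$ is exactly the hard intermediate claim of the standard proof, which nothing in your sketch supplies. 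The contraction hypothesis applies only to $A\times B$ pairs, so it controls mixed quantities such as $\|x_{2m+2}-x_{2n+1}\|\le\alpha^{2}\|x_{2m}-x_{2n-1}\|+(1-\alpha^{2})\dist(A,B)$, never the $A$--$A$ distance you have pinned at $\varepsilon$; and your proposed second invocation of the lemma, ``pinning midpoints of odd-indexed terms in $B$ against a fixed point of $A$,'' has no sequence tending to $\dist(A,B)$ to feed it. The standard route closes this by a separate minimal-index contradiction applied to the mixed distances, proving first that for every $\varepsilon>0$ there is $N$ with $\|x_{2m}-x_{2n+1}\|<\dist(A,B)+\varepsilon$ for all $m>n\ge N$, and only then deducing Cauchyness from a uniform version of the convexity lemma; until such a claim is established, your step 3 does not close. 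A minor further point: with the theorem phrased as it is here, uniqueness of the limit is indeed automatic, but the uniqueness intended in \cite{Eldred} is uniqueness of the best proximity point in $A$, which requires an extra argument (any best proximity point is a fixed point of $T^{2}$, and two distinct ones are excluded by combining the contraction inequality with the convexity lemma).
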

In the theory of best proximity points, one usually considers a cyclic mapping $T$ defined on the union of two (closed) subsets of a given metric space.
Here
the objective is to minimize the expression $d(x, Tx)$ where $x$ runs through the domain of $T$; that is $A\cup B$. In other words, we want to find
$$\mathrm{argmin}\{ d(x, Tx):  x\in A\cup B\}.$$
If $A$ and $B$ intersect, the solution is clearly a fixed point of $T$; otherwise we have
$$d(x, Tx)\ge \dist(A,B),\qquad \forall x\in A\cup B,$$
so that the point at which the equality occurs is called a best proximity point of $T$. This point of view dominates the literature.

Very recently,
N. Shahzad, M. Gabeleh, and O. Olela Otafudu \cite{Gabeleh} considered two mappings $T$ and $S$ simultaneously and established very interesting results. For
technical reasons, the first map should be cyclic and the second one should be noncyclic. According to \cite{Gabeleh}, for a nonempty pair of subsets
$(A,B)$, and a cyclic-noncyclic pair $(T;S)$ on $A\cup B$ (that is,
$T:A\cup B\rightarrow A\cup B$ is cyclic and $S:A\cup B\rightarrow A\cup B$ is noncyclic); they called a point $p\in A\cup B$
a \textit{coincidence best proximity point} for $(T;S)$ provided that
\begin{equation*}
d(Sp,Tp)=\dist(A,B).
\end{equation*}
Note that if $S=I$, the identity map on $A\cup B$, then $p\in A\cup B$ is a best proximity
point for $T$. Also, if $\dist(A,B)=0$, then $p$ is called a \textit{coincidence point} for $(T;S)$ (see \cite{Fukhar} and \cite{Garcia}
for more information). With the definition just given, and depending on the situation as to whether $S$ equals the identity map, or if the distance between
the
underlying sets is zero, one obtains a best proximity point for $T$, or a coincidence point for $T$ and $S$. This was in fact the philosophy behind the
phrase
"coincidence best proximity point" for the pair $(T;S)$.
They then defined the notion of a
cyclic-noncyclic contraction.
\begin{definition}(\cite{Gabeleh})
Let $(A,B)$ be a nonempty pair of subsets of a metric space $(X,d)$ and $T,S:A\cup B\rightarrow A\cup B$ be two mappings. The
pair $(T;S)$ is called a cyclic-noncyclic contraction pair if it satisfies the following conditions:

(1) $(T;S)$ is a cyclic-noncyclic pair on $A\cup B$.

(2) For some $r\in (0,1)$ we have
\begin{equation*}
d(Tx,Ty)\leq rd(Sx,Sy)+(1-r)\dist(A,B), ~~\forall (x,y)\in A\times B.
\end{equation*}
\end{definition}
To state the main result of \cite{Gabeleh}, we need to recall the notion of convexity in the framework of metric spaces.
In \cite{Takahashi}, Takahashi introduced the notion of convexity in metric spaces as follows (see also \cite{Shimizu}).
\begin{definition}
Let $(X,d)$ be a metric space and $I:=[0,1]$. A mapping ${\cal{W}}:X\times X\times I\rightarrow X$ is said to be a convex structure
on $X$ provided that for each $(x,y;\lambda)\in X\times X\times I$ and $u\in X$,
\begin{equation*}
d(u,{\cal{W}}(x,y;\lambda))\leq \lambda d(u,x)+(1-\lambda)d(u,y).
\end{equation*}
\end{definition}

A metric space $(X,d)$ together with a convex structure ${\cal{W}}$ is called a \textit{convex metric space} and is denoted by
$(X,d,{\cal{W}})$. A Banach space and each of its convex subsets are convex metric spaces.
A subset $K$ of a convex metric space $(X,d,{\cal{W}})$ is said to be a convex set provided that ${\cal{W}}(x,y;\lambda)\in K$
for all $x,y\in K$ and $\lambda \in I$.
Similarly,
a convex metric space $(X,d,{\cal{W}})$ is said to be uniformly convex if for any $\varepsilon >0$, there exists $\alpha=\alpha(\varepsilon)$
such that for all $r>0$ and $x,y,z\in X$ with $d(z,x)\leq r, ~d(z,y)\leq r$ and $d(x,y)\geq r\varepsilon$, we have
\begin{equation*}
d(z,{\cal{W}}(x,y;\frac{1}{2}))\leq r(1-\alpha)<r.
\end{equation*}
For example, every uniformly convex Banach space is a uniformly convex metric space.
\begin{definition}(\cite{Gabeleh})
Let $(A,B)$ be a nonempty pair of subsets of a metric space $(X,d)$. A mapping $S:A\cup B\rightarrow A\cup B$ is said to be
a relatively anti-Lipschitzian mapping if there exists $c>0$ such that
\begin{equation*}
d(x,y)\leq c\, d(Sx,Sy), ~~\forall (x,y)\in A\times B.
\end{equation*}
\end{definition}
The main result of Shahzad, et al. reads as follows:
\begin{theorem}(\cite{Gabeleh})
Let $(A,B)$ be a nonempty, closed pair of subsets of a complete uniformly convex metric space $(X,d,{\cal{W}})$ such that $A$ is
convex. Let $(T;S)$ be a cyclic-noncyclic contraction pair defined on $A\cup B$ such that $T(A)\subseteq S(B),\, T(B)\subseteq S(A)$,
and that $S$ is continuous on $A$ and relatively anti-Lipschitzian on $A\cup B$. Then $(T;S)$ has a coincidence best proximity point
in $A$. Further, if $x_{0}\in A$ and $Sx_{n+1}:=Tx_{n}$, then $(x_{2n})$ converges to the coincidence best proximity point of $(T;S)$.
\end{theorem}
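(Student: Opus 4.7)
The plan is to build a Picard-type sequence $\{x_n\}$, show geometric decay of $d(Sx_n,Sx_{n+1})$ toward $\dist(A,B)$, extract Cauchy behavior of $\{Sx_{2n}\}$ via the uniform convexity of $(X,d,{\cal W})$, and then use the continuity and relative anti-Lipschitzian assumptions on $S$ to produce a coincidence-best proximity point in $A$ as a limit of $\{x_{2n}\}$.

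To initialize, pick $x_0\in A$ and use $T(A)\subseteq S(B)$ and $T(B)\subseteq S(A)$ to inductively choose $x_{n+1}$ with $Sx_{n+1}=Tx_n$; this produces $x_{2n}\in A$ and $x_{2n+1}\in B$. Applying the cyclic--noncyclic contraction to the pair $(x_n,x_{n+1})$ and rewriting via $Sx_{k+1}=Tx_k$ gives
\[
d(Sx_{n+1},Sx_{n+2})\le r\,d(Sx_n,Sx_{n+1})+(1-r)\dist(A,B),
\]
and a straightforward induction yields $d(Sx_n,Sx_{n+1})-\dist(A,B)\le r^n\bigl(d(Sx_0,Sx_1)-\dist(A,B)\bigr)$, so $d(Sx_n,Sx_{n+1})\downarrow \dist(A,B)$.

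The core step is to show that $\{Sx_{2n}\}$ is Cauchy in the closed set $A$. I would argue by contradiction in the Eldred--Veeramani style: if some subsequence $d(Sx_{2n_k},Sx_{2m_k})\ge\varepsilon_0$, one refines the indices (using the contraction to control $d(Sx_{2m_k+1},Sx_{2n_k})$) so that both $d(Sx_{2m_k+1},Sx_{2n_k})$ and $d(Sx_{2m_k+1},Sx_{2m_k})$ are arbitrarily close to $\dist(A,B)$. Since $A$ is convex, the midpoint $u_k:={\cal W}(Sx_{2n_k},Sx_{2m_k};\tfrac12)$ lies in $A$, and the uniform convexity of $(X,d,{\cal W})$ applied with center $Sx_{2m_k+1}\in B$ forces $d(Sx_{2m_k+1},u_k)\le r(1-\alpha)<\dist(A,B)$ for some $\alpha>0$ and $r$ just above $\dist(A,B)$, contradicting $d(Sx_{2m_k+1},u_k)\ge \dist(A,B)$. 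Completeness of $A$ then gives $Sx_{2n}\to p^*\in A$.

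To produce the coincidence-best proximity point I would invoke the relative anti-Lipschitzian inequality $d(x_{2n},x_{2m+1})\le c\,d(Sx_{2n},Sx_{2m+1})$ together with the Cauchyness of $\{Sx_{2n}\}$ and $d(Sx_{2m},Sx_{2m+1})\to\dist(A,B)$ to extract a limit $p\in A$ of $\{x_{2n}\}$; continuity of $S$ on $A$ then commutes the limit with $S$, giving $Sp=p^*$. Finally $d(Sp,Tp)=\dist(A,B)$ follows by sandwiching: the lower bound $d(Sp,Tp)\ge\dist(A,B)$ is automatic since $Sp\in A$ and $Tp\in B$, while the upper bound comes from $Sx_{2n}\to Sp$, $d(Sx_{2n},Tx_{2n})\to\dist(A,B)$, and one further application of the contraction to bound $d(Tx_{2n},Tp)$. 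The principal obstacle is the uniform-convexity argument, where one must simultaneously control three distances from a single point in $B$ to both endpoints of a bad pair in $A$ and to their $\cal W$-midpoint; the convexity of $A$ is used crucially so that the midpoint lies in $A$ and is thus at distance at least $\dist(A,B)$ from any point in $B$, which is exactly what supplies the contradiction. A secondary subtlety is that the relative anti-Lipschitzian condition is only imposed across $A\times B$, so the transfer from $\{Sx_{2n}\}$ to $\{x_{2n}\}$ must be arranged through cross indices, not through the purely $A\times A$ pairs that the estimate does not control.
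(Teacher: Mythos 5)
This theorem is quoted in the paper from \cite{Gabeleh} without proof; the closest argument actually carried out in the paper is the proof of its tripartite analogue (Theorem \ref{9}, with Lemmas \ref{4}, \ref{5}, \ref{8}), and your first three steps follow exactly that Eldred--Veeramani scheme: construction of $(x_n)$ with $Sx_{n+1}=Tx_n$, the geometric decay $d(Sx_n,Sx_{n+1})\to\dist(A,B)$, and Cauchyness of $(Sx_{2n})$ by uniform convexity with the ${\cal W}$-midpoint kept inside the convex set $A$. Those parts are sound in outline, granting that your ``refinement of indices'' hides the two auxiliary facts the paper proves separately (the analogue of Lemma \ref{8}, $d(Sx_{2n},Sx_{2n+2})\to 0$, and the claim that $d(Sx_{2m},Sx_{2n+1})<\dist(A,B)+\varepsilon$ eventually), and that the strict inequality $(\dist(A,B)+\varepsilon)(1-\alpha)<\dist(A,B)$ is only available when $\dist(A,B)>0$, a degenerate case the paper's own proof of Theorem \ref{9} glosses over as well.

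The genuine gap is your fourth step, the transfer from Cauchyness of $(Sx_{2n})$ to convergence of $(x_{2n})$. You propose to use only the cross estimates $d(x_{2n},x_{2m+1})\le c\,d(Sx_{2n},Sx_{2m+1})$; but $d(Sx_{2n},Sx_{2m+1})\to\dist(A,B)$, not $0$, so this yields only $\limsup d(x_{2n},x_{2m+1})\le c\,\dist(A,B)$ and, via the triangle inequality, $d(x_{2n},x_{2m})\le 2c\,\dist(A,B)+o(1)$ --- no Cauchy information whatsoever when $\dist(A,B)>0$ (indeed $d(x_{2n},x_{2m+1})\ge\dist(A,B)$ always, so cross distances can never tend to $0$). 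Your closing remark that the transfer ``must be arranged through cross indices, not through the purely $A\times A$ pairs'' is precisely the wrong turn: what the argument needs, and what the paper's tripartite proof actually does when it writes $G(x_{3l},x_{3m},x_{3n})\le c\,G(Kx_{3l},Kx_{3m},Kx_{3n})\to 0$ with all three indices in $A$, is to apply the anti-Lipschitzian inequality to pairs both of whose entries are among the $x_{2n}\in A$, so that the right-hand side is governed by the Cauchyness of $(Sx_{2n})$ and tends to $0$; then $(x_{2n})$ is Cauchy, converges to some $p$ in the closed set $A$ of the complete space, and continuity of $S$ on $A$ together with your sandwich estimate does finish the proof. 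So either the anti-Lipschitz hypothesis must be invoked on same-set pairs (as the cited sources implicitly do, even though the displayed definition states it only for $(x,y)\in A\times B$), or your route produces no limit $p$ at all when $\dist(A,B)>0$; as written, the existence of the coincidence best proximity point and the convergence of $(x_{2n})$ are not established.
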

Her we intend to generalize the above mentioned result in two directions. First, we consider a $G$-metric space instead of a metric space (for the
definition of $G$-metric and $G$-distance of sets, see the next section). Therefore, we have to
modify the notion of convex structure to incorporate in this new setting.
The second and more important departure point from \cite{Gabeleh} is that we instead consider a triple of mappings $(K;T;S)$ defined on the
union of three (closed) subsets of a $G$-metric space; namely $A\cup B\cup C$. We shall therefore define the new notions of coincidence point as well as
best proximity point for the triple $(K;T;S)$. Here, what we need is the concept of $G$-distance of three sets $A, B, C$, that is
\begin{equation*}
G(A,B,C):=\inf\{G(a,b,c):a\in A, b\in B, c\in C\}.
\end{equation*}
We also need to impose the right conditions on the mappings involved. This will justify the new notions of left cyclic mapping, right cyclic mapping, as
well as noncyclic mapping in the setting in which the domain has three components $A$, $B$, and $C$. This will be done in \S 3 where we define
the new concepts \textit{tripartite coincidence point} and \textit{tripartite best proximity point} for a give triple $(K;T;S)$ on $A\cup B\cup C$.
The main result of this paper is to prove existence and convergence theorems for tripartite coincidence points and tripartite best proximity points for a given triple $(K;T;S)$. In \S 3, we will introduce the new concept of tripartite contractions and will prove the mentioned results for this mappings. Finally, in \S 4, we shall introduce the notion of tripartite semi-contractions, and shall prove tripartite coincidence-best proximity points theorems for this class of mappings.

It is tempting to call these new notions as "tripled coincidence point" and "tripled best proximity point", but this phrases has already been used to indicate
particular points associated to mappings with three variables; that is for a mapping $F$ defined from $X\times X\times X$ into $X$ (see \cite{Borcut}, \cite{cho}). To avoid confusion, we have decided to adhere the adjective tripartite to this new notions. Our study is in line with the
existence of best proximity pairs which was first studied in \cite{Eldred2} by using a geometric property on a nonempty pair of subsets of a
Banach space, called \textit{proximal normal structure}, for noncyclic relatively nonexpansive mappings.

Related results on the existence and convergence of best proximity pairs can also be found in \cite{Abkar, Althagafi, Delasen, Delasen2, Dibari, Espinola, Leon, Gabeleh2, Karapinar, Norouzian1, Norouzian2, Pragadeeswarar, Suzuki} and the references therein.

\section{Convex structure in $G$-metric spaces}
In this section we first recall some necessary facts on $G$-metric spaces, a notion
introduced by
Mustafa and Sims \cite{Mustafa1} in 2006. Among other things, they characterized the
Banach fixed point theorem in $G$-metric spaces. Following their pioneering work, many authors have discussed fixed
point theorems in the framework of $G$-metric spaces; see \cite{Hussain}, \cite{Mustafa2} and \cite{VanAn}.
Second, we shall introduce a convex structure on this spaces. This convex structure will be used in the
statement of our main result in the next sections.

\begin{definition}\label{1}(\cite{Mustafa1})
Let $X$ be a nonempty set, and let $G:X\times X\times X\rightarrow \Bbb{R}^{+}$ be a function satisfying:

(1) $G(x,y,z)=0$, if $x=y=z$,

(2) $0<G(x,x,y)$, for all $x,y\in X$, with $x\neq y$,

(3) $G(x,x,y)\leq G(x,y,z)$, for all $x,y,z\in X$ with $z\neq y$,

(4) $G(x,y,z)=G(x,z,y)=G(y,z,x)=\cdots$ (symmetry in all three variables), and

(5) $G(x,y,z)\leq G(x,a,a)+G(a,y,z)$, for all $x,y,z,a\in X$, (rectangle inequality), \\
\\
then the function $G$ is called a generalized metric, or, more specifically a $G$-metric on $X$,
and the pair $(X,G)$ is called a $G$-metric space.
\end{definition}
Clearly these properties are satisfied when $G(x,y,z)$ is the perimeter of a triangle with vertices at
$x,y$ and $z$ in $\Bbb{R}^{2}$.
\begin{example}\label{3}(\cite{Mustafa1})
Let $(X,d)$ be a metric space, then
\begin{equation*}
G_{s}(d)(x,y,z)=\frac{1}{3}[d(x,y)+d(y,z)+d(x,z)]
\end{equation*}
and
\begin{equation*}
G_{m}(d)(x,y,z)=\max \{d(x,y),d(y,z),d(x,z)\}
\end{equation*}
define $G$-metrics on $X$.
\end{example}
\begin{definition}(\cite{Mustafa1})
A $G$-metric space $(X,G)$ is symmetric if
\begin{equation*}
G(x,y,y)=G(x,x,y), ~ for ~ all ~ x,y\in X.
\end{equation*}
\end{definition}
Clearly, any $G$-metric space where $G$ is derived from an underlying metric as in the above Example is symmetric.
\begin{theorem}\label{2}(\cite{Mustafa1})
Let $(X,G)$ be a $G$-metric space, then for any $x,y,z$ and $a\in X$ we have:

(1) if $G(x,y,z)=0$, then $x=y=z$,

(2) $G(x,y,z)\leq G(x,x,y)+G(x,x,z)$,

(3) $G(x,y,y)\leq 2G(y,x,x)$,

(4) $G(x,y,z)\leq G(x,a,z)+G(a,y,z)$,

(5) $G(x,y,z)\leq \frac{2}{3}[G(x,y,a)+G(x,a,z)+G(a,y,z)]$,

(6) $G(x,y,z)\leq G(x,a,a)+G(y,a,a)+G(z,a,a)$.
\end{theorem}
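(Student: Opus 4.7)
The plan is to derive the six parts in the order (2), (3), (1), (4), (5), (6), since each later claim builds on the axioms of Definition \ref{1} (call them (G1)--(G5)) together with the symmetry in all three variables and, occasionally, earlier parts of the theorem itself.

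For (2), I would use symmetry to rewrite $G(x,y,z)=G(y,x,z)$ and then apply the rectangle inequality (G5) with fourth point $a=x$: this yields $G(y,x,z)\le G(y,x,x)+G(x,x,z)$, which after symmetry is $G(x,y,z)\le G(x,x,y)+G(x,x,z)$. Part (3) is then immediate by specialising (2) to $z=y$, giving $G(x,y,y)\le 2G(x,x,y)=2G(y,x,x)$. For (1), I would argue by contradiction: assume $G(x,y,z)=0$ while not all three points are equal; by symmetry we may suppose $x\neq y$. If $z=y$, then $G(x,y,z)=G(x,y,y)=G(y,y,x)$, which is strictly positive by (G2); if $z\neq y$, then (G3) together with (G2) gives $0<G(x,x,y)\le G(x,y,z)$, again a contradiction.

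The main obstacle is (4), which refines the rectangle inequality (G5) by allowing the auxiliary point $a$ to occupy only a single slot on the right-hand side. I would split into two cases. If $z\neq a$, then (G3) yields $G(x,a,a)\le G(x,a,z)$, so combining with (G5) gives $G(x,y,z)\le G(x,a,a)+G(a,y,z)\le G(x,a,z)+G(a,y,z)$. If $z=a$, the claim reduces to $G(x,y,a)\le G(x,a,a)+G(a,y,a)$, which is simply (G5) with fourth point $a$.

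With (4) in hand, (5) follows by symmetrisation: applying (4) to each of the cyclic permutations of $(x,y,z)$ and rewriting every term in canonical form via symmetry produces the three inequalities
\[
G(x,y,z)\le G(x,a,z)+G(a,y,z),\quad G(x,y,z)\le G(x,y,a)+G(x,a,z),\quad G(x,y,z)\le G(x,y,a)+G(a,y,z),
\]
whose sum is $3G(x,y,z)\le 2[G(x,y,a)+G(x,a,z)+G(a,y,z)]$. Finally, for (6) I would chain (G5) with the already-proved part (2): (G5) gives $G(x,y,z)\le G(x,a,a)+G(a,y,z)$, and (2) applied to the triple $(a,y,z)$ yields $G(a,y,z)\le G(a,a,y)+G(a,a,z)=G(y,a,a)+G(z,a,a)$, which combine to give (6).
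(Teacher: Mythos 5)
The paper itself offers no proof of this theorem (it is quoted from Mustafa and Sims), so your reconstruction can only be judged on its own terms. Parts (1), (2), (3), (6), and the symmetrisation deriving (5) from (4) are all correct. The genuine gap is in part (4). Condition (3) of Definition \ref{1} has the shape $G(u,u,v)\le G(u,v,w)$ subject to $w\neq v$; to bound $G(x,a,a)=G(a,a,x)$ you are forced to take $u=a$, $v=x$, so what this axiom actually gives is $G(x,a,a)\le G(x,a,z)$ \emph{provided $z\neq x$}, not provided $z\neq a$ as you assert. Your case split therefore leaves the subcase $z=x\neq a$ uncovered, and there your intermediate inequality is genuinely false in non-symmetric $G$-metric spaces: on $X=\{p,q\}$ with $G(p,p,q)=1$, $G(p,q,q)=2$ (extended by symmetry; this satisfies all five conditions of Definition \ref{1} and is the standard non-symmetric example) one has $G(p,q,q)=2>1=G(p,q,p)$, i.e.\ $G(x,a,a)>G(x,a,z)$ for $x=z=p$, $a=q$, $z\neq a$. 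So the step "combine (G5) with $G(x,a,a)\le G(x,a,z)$" cannot be carried out in that subcase, even though the conclusion (4) itself remains true there.

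The repair is short, and with it your scheme goes through. Split instead on $z\neq x$ versus $z=x$: if $z\neq x$, your argument works verbatim with the corrected hypothesis (and it subsumes your case $z=a$); if $z=x$ and $y=x$, the left side is $G(x,x,x)=0$ by condition (1) of Definition \ref{1}; if $z=x$ and $y\neq x$, apply the rectangle inequality as $G(x,y,x)=G(y,x,x)\le G(y,a,a)+G(a,x,x)$ and then condition (3) with repeated point $a$, single point $y$ and third point $x\neq y$ to get $G(y,a,a)=G(a,a,y)\le G(a,y,x)$, whence $G(x,y,x)\le G(x,a,x)+G(a,y,x)$, which is (4) with $z=x$. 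Once (4) is secured this way, your derivations of (5) and of the remaining parts are correct.
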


\begin{remark}(\cite{Mustafa1})
For any nonempty set $X$, we have seen that from any metric on $X$ we can construct a $G$-metric (see Example \ref{3}),
conversely, for any $G$-metric $G$ on $X$,
\begin{equation*}
d_{G}(x,y)=G(x,y,y)+G(x,x,y),
\end{equation*}
is readily seen to define a metric on $X$, the metric associated with $G$ satisfies
\begin{equation*}
G(x,y,z)\leq G_{s}(d_{G})(x,y,z)\leq 2G(x,y,z).
\end{equation*}
and
\begin{equation*}
\frac{1}{2}G(x,y,z)\leq G_{m}(d_{G)}(x,y,z)\leq 2G(x,y,z).
\end{equation*}
\end{remark}

\begin{theorem}(\cite{Mustafa1})
Let $(X,G)$ be $G$-metric space, then for a sequence $(x_{n})\subseteq X$ and point $x\in X$ the following are equivalent:

(1) $(x_{n})$ is $G$-convergent to $x$.

(2) $d_{G}(x_{n},x)\rightarrow 0$, as $n\rightarrow \infty$ (that is, $(x_{n})$ converges to $x$ relative to
the metric $d_{G}$).

(3) $G(x_{n},x_{n},x)\rightarrow 0$, $n\rightarrow \infty$.

(4) $G(x_{n},x,x)\rightarrow 0$, as $n\rightarrow \infty$.

(5) $G(x_{m},x_{n},x)\rightarrow 0$, as $m,n\rightarrow \infty$.
\end{theorem}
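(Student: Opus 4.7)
The plan is to establish the five-way equivalence by a short cycle of implications that uses only the six-fold symmetry axiom of Definition \ref{1}, the consequences of the rectangle inequality collected in Theorem \ref{2}, and the explicit formula $d_G(x,y)=G(x,y,y)+G(x,x,y)$. I would arrange the argument as $(1)\Leftrightarrow(5)\Rightarrow(3)\Leftrightarrow(4)\Leftrightarrow(2)$ together with $(4)\Rightarrow(5)$ to close the loop.

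The implication $(1)\Leftrightarrow(5)$ is essentially definitional: $G$-convergence of $(x_n)$ to $x$ is the statement that $G(x,x_n,x_m)\to 0$ as $m,n\to\infty$, and the full symmetry of $G$ identifies this with $G(x_m,x_n,x)$. Next, $(5)\Rightarrow(3)$ drops out by specializing to $m=n$. For $(3)\Leftrightarrow(4)$ I would invoke item $(3)$ of Theorem \ref{2}, namely $G(x,y,y)\le 2G(y,x,x)$, applied once with $(x,y)=(x,x_n)$ and once with $(x,y)=(x_n,x)$; combined with symmetry this yields the two-sided bound
\begin{equation*}
G(x_n,x,x)\le 2G(x_n,x_n,x)\quad\text{and}\quad G(x_n,x_n,x)\le 2G(x_n,x,x),
\end{equation*}
so (3) and (4) force one another. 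The equivalence with (2) is then immediate, since $d_G(x_n,x)=G(x_n,x,x)+G(x_n,x_n,x)$ is a sum of two nonnegative terms each bounded above by $d_G(x_n,x)$.

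The only step with any real content is $(4)\Rightarrow(5)$, and for this I would apply item $(4)$ of Theorem \ref{2} with the splitting point $a=x$:
\begin{equation*}
G(x_m,x_n,x)\le G(x_m,x,x)+G(x,x_n,x)=G(x_m,x,x)+G(x_n,x,x),
\end{equation*}
where the last equality is symmetry. Both summands tend to $0$ by (4), so $G(x_m,x_n,x)\to 0$ as $m,n\to\infty$. I do not anticipate any genuine obstacle; the only point to keep in mind is to invoke the full six-fold symmetry of $G$ rather than a mere two-variable swap, since that is what lets one identify $G(x,x_n,x_m)$ with $G(x_m,x_n,x)$ and move freely between the triples $(x,x_n,x_n)$ and $(x_n,x_n,x)$ when calibrating constants.
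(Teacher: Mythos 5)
Your proposal is correct, but note that the paper offers no proof to compare against: this theorem is quoted verbatim from Mustafa and Sims \cite{Mustafa1} as background, so the only benchmark is the standard argument in that reference, and yours coincides with it. The chain $(1)\Leftrightarrow(5)\Rightarrow(3)\Leftrightarrow(4)\Leftrightarrow(2)$ closed by $(4)\Rightarrow(5)$ is logically complete, and each step checks out: the two-sided bound between $G(x_n,x,x)$ and $G(x_n,x_n,x)$ follows from item (3) of Theorem \ref{2} plus full symmetry, the identity $d_G(x_n,x)=G(x_n,x,x)+G(x_n,x_n,x)$ with nonnegativity of both summands gives the equivalence with (2), and the estimate $G(x_m,x_n,x)\le G(x_m,x,x)+G(x_n,x,x)$ (item (4) of Theorem \ref{2}, or the rectangle axiom, with $a=x$) gives $(4)\Rightarrow(5)$. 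The one caveat worth recording is that the paper never states the definition of $G$-convergence, so your ``essentially definitional'' step $(1)\Leftrightarrow(5)$ implicitly adopts the Mustafa--Sims definition, namely $G(x,x_n,x_m)\to 0$ as $n,m\to\infty$; that is indeed their definition, but if one instead defines $G$-convergence via the topology generated by the balls $B_G(x,r)=\{y: G(x,y,y)<r\}$, then $(1)\Leftrightarrow(5)$ is no longer a tautology and needs the same kind of rectangle-inequality estimate you already use elsewhere, so it would be worth a sentence making explicit which definition you start from.
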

\begin{definition}(\cite{Lashkaripour})
Let $A,B$ and $C$ be three nonempty subsets of a $G$-metric space $(X,G)$. We define $G$-distance of three subsets $A,B$ and
$C$, $G(A,B,C)$, as follows:
\begin{equation*}
G(A,B,C):=\inf\{G(a,b,c):a\in A, b\in B, c\in C\}.
\end{equation*}
\end{definition}
Given $(A,B,C)$ a triple of nonempty subsets of a $G$-metric space $(X,G)$, then its proximal triple
is the triple $(A_{0},B_{0},C_{0})$ given by
\begin{equation*}
A_{0}:=\{a\in A:G(a,b,c)=G(A,B,C) ~\textrm{for some} ~b\in B,\, c\in C\},
\end{equation*}
\begin{equation*}
B_{0}:=\{b\in B:G(a,b,c)=G(A,B,C) ~\textrm{for some} ~a\in A,\, c\in C\},
\end{equation*}
\begin{equation*}
C_{0}:=\{c\in C:G(a,b,c)=G(A,B,C) ~\textrm{for some} ~a\in A,\, ~b\in B\}.
\end{equation*}
A triple of subsets $(A,B,C)$ is said to be proximal if
$A=A_{0},\, B=B_{0},$ and $C=C_{0}$.

Now it is time to introduce a convex structure on $G$-metric spaces.
We begin with the following definition.
\begin{definition}
Let $(X,G)$ be a $G$-metric space and $I:=[0,1]$. A mapping $\Bbb{W}:X\times X\times X\times I\times I\times I\rightarrow X$ is said
to be a convex structure on $(X,G)$ provided that for each $x,y,z,u,v\in X$, and each $\lambda_{1},\lambda_{2},\lambda_{3}\in I$ satisfying
$\lambda_1+\lambda_2+\lambda_3=1$, we have
\begin{equation*}
G\left (u,v,\Bbb{W}(x,y,z;\lambda_{1},\lambda_{2},\lambda_{3})\right )\le \lambda_1 G(u,v,x)+\lambda_2 G(u,v,y)+\lambda_3 G(u,v,z).
\end{equation*}
\begin{itemize}
\item
A $G$-metric space $(X,G)$ together with a convex structure $\Bbb{W}$ is called a convex $G$-metric space, and is denoted
by $(X,G,\Bbb{W})$.
\item 
A subset $U$ of a convex $G$-metric space $(X,G,\Bbb{W})$ is said to be a $G$-convex set provided that
$\Bbb{W}(x,y,z,;\lambda_{1},\lambda_{2},\lambda_{3})\in U$,
for all $x,y,z\in U$ and $\lambda_{1},\lambda_{2},\lambda_{3}\in I$ such that
$\lambda_{1}+\lambda_{2}+\lambda_{3}=1$.
\item
A convex $G$-metric space $(X,G,\Bbb{W})$ is said to be {\it uniformly convex} if for any $\varepsilon >0$, there exists
$\alpha=\alpha(\varepsilon)$ such that for all $r>0$ and $x,y,z,u,v\in X$ where $x,y,z$ are distinct points satisfying
$G(u,v,x)\leq r$, $G(u,v,y)\leq r$,
$G(u,v,z)\leq r$ and $G(x,y,z)\geq r\varepsilon$ we have
\begin{equation*}
G(u,v,\Bbb{W}(x,y,z;\frac{1}{3},\frac{1}{3},\frac{1}{3}))\leq r(1-\alpha)<r.
\end{equation*}
\end{itemize}
\end{definition}
It is clear from the definition that if $x,y,z$ are three points in $\mathbb{R}^2$, then $\Bbb{W}(x,y,z;\lambda_{1},\lambda_{2},\lambda_{3})$
is a point of the triangle with
vertices at $x,y$ and $z$ in $\Bbb{R}^{2}$. 

\begin{example}
Consider $X:=[-1,1]$ with the usual metric. For each $x,y,z\in X$ we define
\begin{equation*}
G(x,y,z):=|x-y|+|y-z|+|z-x|.
\end{equation*}
Clearly, $(X,G)$ is a $G$-metric space. Suppose that $\varepsilon>0$. For all $0<\varepsilon\leq r$ and for
all $u,v,x,y,z\in X$ in which $x,y,z$ are distinct
points and
\begin{equation*}
G(u,v,x)\leq r, ~G(u,v,y)\leq r, ~G(u,v,z)\leq r, ~G(x,y,z)\geq r\varepsilon,
\end{equation*}
we have
\begin{equation*}
G(u,v,\Bbb{W}(x,y,z;\frac{1}{3},\frac{1}{3},\frac{1}{3}))=|u-v|+|v-\frac{x+y+z}{3}|+|\frac{x+y+z}{3}-u|.
\end{equation*}
Since $x,y,z$ are distinct points and $\Bbb{W}(x,y,z;\frac{1}{3},\frac{1}{3},\frac{1}{3})$ is a point in middle of
 triangle with vertices $x,y,z$, we have
\begin{equation*}
|u-v|+|v-\frac{x+y+z}{3}|+|\frac{x+y+z}{3}-u|<\max\{|u-v|+|v-p|+|p-u|:p\in \{x,y,z\}\}.
\end{equation*}
Consequently, we have
\begin{equation*}
G(u,v,\Bbb{W}(x,y,z;\frac{1}{3},\frac{1}{3},\frac{1}{3}))<\max\{G(u,v,p):p\in \{x,y,z\}\}\leq r
\end{equation*}
which implies that
\begin{equation*}
G(u,v,\Bbb{W}(x,y,z;\frac{1}{3},\frac{1}{3},\frac{1}{3}))<r.
\end{equation*}
Thus, there exists $\beta\in (0,\frac{r}{\varepsilon})$ such that for all $u,v\in X$ and all $x,y,z\in X$ that are distinct, we have
\begin{equation*}
r-G(u,v,\Bbb{W}(x,y,z;\frac{1}{3},\frac{1}{3},\frac{1}{3}))\geq \beta \varepsilon,
\end{equation*}
and finally,
\begin{equation*}
G(u,v,\Bbb{W}(x,y,z;\frac{1}{3},\frac{1}{3},\frac{1}{3}))\leq r-\beta \varepsilon.
\end{equation*}
Now, let $(1-\alpha)r=r-\beta \varepsilon$. Therefore, by assumption that $\alpha:=\frac{\beta \varepsilon}{r}$, we have $\alpha\in (0,1)$ and
\begin{equation*}
G(u,v,\Bbb{W}(x,y,z;\frac{1}{3},\frac{1}{3},\frac{1}{3}))\leq r(1-\alpha).
\end{equation*}
Consequently, $(X,G,\Bbb{W})$ is a uniformly convex $G$-metric space.
\end{example}
\section{Tripartite coincidence-best proximity points}
We begin this section by introducing the new notions of coincidence and best proximity points.

\begin{definition}
Let $A,B$ and $C$ be nonempty subsets of a $G$-metric space $(X,G)$. Then

(1) A mapping $T:A\cup B\cup C\rightarrow A\cup B\cup C$ is said
to be a \textit{right cyclic mapping} if
\begin{equation*}
T(A)\subseteq B, ~T(B)\subseteq C, ~T(C)\subseteq A.
\end{equation*}

(2) A mapping $S:A\cup B\cup C\rightarrow A\cup B\cup C$ is said
to be a \textit{left cyclic mapping} if
\begin{equation*}
S(A)\subseteq C, ~S(C)\subseteq B, ~S(B)\subseteq A.
\end{equation*}

(3) A mapping $K:A\cup B\cup C\rightarrow A\cup B\cup C$ is said
to be a \textit{(tripartite) noncyclic mapping} if
\begin{equation*}
K(A)\subseteq A, ~K(B)\subseteq B, ~K(C)\subseteq C.
\end{equation*}
\end{definition}
\begin{example}
Let $X:=\Bbb{R}$ and
\begin{equation*}
A=\{3n\pi: ~n\in \Bbb{N}\}, ~~B=\{3n\pi +\pi: ~n\in \Bbb{N}\}, ~~C=\{3n\pi +2\pi: ~n\in \Bbb{N}\}.
\end{equation*}
Then the mappings $T,S,K:A\cup B\cup C\rightarrow A\cup B\cup C$ defined by
\begin{equation*}
Tx:=x+\pi, ~~Sx:=x+2\pi, ~~Kx:=x+3\pi,
\end{equation*}
are right cyclic, left cyclic, and (tripartite) noncyclic, respectively. \\

We will at times refer to the
triple $(T;S;K)$ as a right cyclic-left cyclic-(tripartite) noncyclic triple on $A\cup B\cup C$; or briefly as an \textbf{RLN} triple.
\end{example}
\begin{definition}
Let $A,B$ and $C$ be nonempty subsets of a $G$-metric space $(X,G)$, $T:A\cup B\cup C\rightarrow A\cup B\cup C$ be right cyclic and
$S:A\cup B\cup C\rightarrow A\cup B\cup C$ be left cyclic. A point $p\in A\cup B\cup C$ is said to be a \textsl{tripartite best proximity
point} for $T$ and $S$ provided that
\begin{equation*}
G(p,Tp,Sp)=G(A,B,C).
\end{equation*}
\end{definition}
\begin{definition}
Let $A,B$ and $C$ be nonempty subsets of a $G$-metric space $(X,G)$ and $(T;S;K)$ be an \textbf{RLN} triple
on $A\cup B\cup C$; that is, $T:A\cup B\cup C\rightarrow A\cup B\cup C$ is right cyclic,
$S:A\cup B\cup C\rightarrow A\cup B\cup C$ is left cyclic and $K:A\cup B\cup C\rightarrow A\cup B\cup C$ is tripartite
noncyclic. A point $p\in A\cup B\cup C$ is said to be a \textsl{tripartite coincidence-best proximity point} for $(T;S;K)$
provided that
\begin{equation*}
G(Kp,Tp,Sp)=G(A,B,C).
\end{equation*}
\end{definition}
Note that if in the above definition $K=I$, where $I$ denotes the identity map on $A\cup B\cup C$, then $p\in A\cup B\cup C$
will become a \textsl{tripartite best proximity point} of the mappings $T$ and $S$. Moreover, if $G(A,B,C)=0$, then $p$ will be called a
\textsl{tripartite coincidence point} for $(T;S;K)$.
\begin{definition}
Let $A,B$ and $C$ be nonempty subsets of a $G$-metric space $(X,G)$. A mapping $S:A\cup B\cup C\rightarrow A\cup B\cup C$ is
said to a left cyclic contraction if $S$ is left cyclic and
\begin{equation*}
G(Sx,Sy,Sz)\leq rG(x,y,z)
\end{equation*}
for some $r\in (0,1)$ and for all $(x,y,z)\in A\times B\times C$.
\end{definition}
\begin{definition}
Let $A,B$ and $C$ be nonempty subsets of a $G$-metric space $(X,G)$ and $T,S,K:A\cup B\cup C\rightarrow A\cup B\cup C$ be
three mappings. The triple $(T;S;K)$ is called a \textsl{tripartite contraction} if

(1) $(T;S;K)$ is an \textbf{RLN} on $A\cup B\cup C$.

(2) For some $r\in (0,1)$ and for all $(x,y,z)\in A\times B\times C$ we have
\begin{equation*}
G(Sx,Sy,Sz)\leq rG(Kx,Ky,Kz)+(1-r)G(A,B,C)
\end{equation*}
and
\begin{equation*}
G(Tx,Ty,Tz)\leq rG(Sx,Sy,Sz).
\end{equation*}
\end{definition}
\begin{example}
Let $X:=\Bbb{R}$ and let
\begin{equation*}
G(x,y,z)=|x-y|+|y-z|+|z-x|
\end{equation*}
be a $G$-metric on $X$. In addition, let
\begin{equation*}
A=\{3n\pi: ~n\in \Bbb{N}\}, ~~B=\{3n\pi +\pi: ~n\in \Bbb{N}\}, ~~C=\{3n\pi +2\pi: ~n\in \Bbb{N}\}.
\end{equation*}
Then for mappings $T,S,K:A\cup B\cup C\rightarrow A\cup B\cup C$ defined by
\begin{equation*}
Tx:=x+\pi, ~~Sx:=4x+2\pi, ~~Kx:=12x+3\pi,
\end{equation*}
for each $(x,y,z)\in A\times B\times C$, we have
\begin{flalign*}
G(Tx,Ty,Tz)&=|(x+\pi)-(y+\pi)|+|(y+\pi)-(z+\pi)|+|(z+\pi)-(x+\pi)| \\
&=|x-y|+|y-z|+|z-x|=\frac{1}{4}(|4x-4y|+|4y-4z|+|4z-4x|) \\
&=\frac{1}{4}(|(4x+2\pi)-(4y+2\pi)|+|(4y+2\pi)-(4z+2\pi)|+|(4z+2\pi)-(4x+2\pi)|) \\
&\leq \frac{1}{3}(|(4x+2\pi)-(4y+2\pi)|+|(4y+2\pi)-(4z+2\pi)|+|(4z+2\pi)-(4x+2\pi)|) \\
&=\frac{1}{3}G(Sx,Sy,Sz),
\end{flalign*}
and
\begin{flalign*}
G(Sx,Sy,Sz)&=|(4x+2\pi)-(4y+2\pi)|+|(4y+2\pi)-(4z+2\pi)|+|(4z+2\pi)-(4x+2\pi)| \\
&=|4x-4y|+|4y-4z|+|4z-4x|=\frac{1}{3}(|12x-12y|+|12y-12z|+|12z-12x|) \\
&=\frac{1}{3}(|(12x+3\pi)-(12y+3\pi)|+|(12y+3\pi)-(12z+3\pi)|+|(12z+3\pi)-(12x+3\pi)|) \\
&=\frac{1}{3}G(Kx,Ky,Kz) \\
&\leq \frac{1}{3}G(Kx,Ky,Kz)+\frac{2}{3}G(A,B,C).
\end{flalign*}
This implies that $(T;S;K)$ is a tripartite contraction with $r=\frac{1}{3}$.
\end{example}
\begin{remark}
It follows from the condition (2) of the above definition that $$G(Sx,Sy,Sz)\leq G(Kx,Ky,Kz),\quad \forall (x,y,z)\in A\times B\times C.$$ Moreover,
if $K$ is a noncyclic relatively nonexpansive mapping; meaning that $$G(Kx,Ky,Kz)\leq G(x,y,z)\quad \forall (x,y,z)\in A\times B\times C,$$
then $S$ is a left cyclic contraction. In addition, if in the above definition $K$ is $G$-continuous, then $S$ and $T$ are $G$-continuous as well
(that is, they are continuous with respect to the topology induced by the $G$-metric).
\end{remark}
\begin{lemma}\label{4}
Let $(A,B,C)$ be a triple of nonempty subsets of a $G$-metric space $(X,G)$ and let $(T;S;K)$ be a tripartite contraction on
$A\cup B\cup C$. Suppose that $T(A)\subseteq S(C)\subseteq K(B)$, $T(B)\subseteq S(A)\subseteq K(C)$ and
$T(C)\subseteq S(B)\subseteq K(A)$. Then there exists a sequence $(x_{n})$ in $X$ such that $Tx_{n}=Sx_{n+1}=Kx_{n+2}$
for each $n\geq 0$; moreover, $(x_{3n}), ~(x_{3n+1})$ and $(x_{3n+2})$ are sequences in $A,C$ and $B$ respectively, and
\begin{equation*}
G(Tx_{3n},Tx_{3n+1},Tx_{3n+2})\rightarrow 0,\quad n\to\infty.
\end{equation*}
\end{lemma}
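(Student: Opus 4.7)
The plan has two stages: first construct the sequence $(x_n)$ by cascading through the three inclusion chains, and then derive a geometric decay estimate from the two contraction inequalities together with the symmetry of $G$.

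For the construction, fix $x_0\in A$ arbitrarily. Since $Tx_0\in T(A)\subseteq S(C)$, choose $x_1\in C$ with $Sx_1=Tx_0$; since $Sx_1\in S(C)\subseteq K(B)$, choose $x_2\in B$ with $Kx_2=Sx_1=Tx_0$. This secures the $n=0$ identity $Tx_0=Sx_1=Kx_2$. Proceeding inductively and alternating through the remaining chains $T(C)\subseteq S(B)\subseteq K(A)$ and $T(B)\subseteq S(A)\subseteq K(C)$, we produce $x_3\in A$, $x_4\in C$, $x_5\in B$, and so on, satisfying $Tx_n=Sx_{n+1}=Kx_{n+2}$ for every $n\geq 0$. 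The right-cyclic action of $T$, the left-cyclic action of $S$ and the noncyclic nature of $K$ together force the pattern $x_{3n}\in A$, $x_{3n+1}\in C$, $x_{3n+2}\in B$.

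For the decay, set $D_n:=G(Tx_n,Tx_{n+1},Tx_{n+2})$; since $G$ is invariant under every permutation of its three arguments, the order within $D_n$ is immaterial. For $n\geq 1$, reorder the triple $(x_{n-1},x_n,x_{n+1})$ so that its entries lie respectively in $A$, $B$ and $C$, apply the second inequality of the tripartite contraction $G(Tu,Tv,Tw)\leq r\,G(Su,Sv,Sw)$, and substitute the iteration identity $Sx_{m+1}=Tx_m$. Combined with the symmetry of $G$ this yields $D_n\leq r\,D_{n-1}$; iterating, $D_n\leq r^n D_0$, and in particular $D_{3n}\to 0$ as $n\to\infty$.

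The main obstacle is the construction rather than the estimation. The relation $Tx_n=Sx_{n+1}=Kx_{n+2}$ places two separate requirements on $x_{n+2}$: from index $n$ one needs $Kx_{n+2}=Tx_n$, and from index $n+1$ one needs $Sx_{n+2}=Tx_{n+1}$. The three chains $T(\cdot)\subseteq S(\cdot)\subseteq K(\cdot)$ across the three cyclic classes are precisely what makes both preimage problems simultaneously solvable at each step; each choice of $x_{n+1}$ has to preserve the feasibility of selecting $x_{n+2}$. Once the sequence is produced, the contraction estimate is essentially mechanical. It is worth noting that $D_n\geq G(A,B,C)$ for every $n$, so the conclusion $D_{3n}\to 0$ implicitly forces $G(A,B,C)=0$ under these hypotheses.
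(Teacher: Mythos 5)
Your argument is essentially the paper's own proof: the same cascading use of the inclusion chains to produce $x_1\in C$, $x_2\in B$, $x_3\in A,\dots$ with $Tx_n=Sx_{n+1}=Kx_{n+2}$, followed by iterating the inequality $G(Tx,Ty,Tz)\le r\,G(Sx,Sy,Sz)$ together with the substitution $Sx_{m+1}=Tx_m$ and the symmetry of $G$ to get $D_n\le r\,D_{n-1}$, hence $D_{3n}\le r^{3n}D_0\to 0$. Two harmless remarks: to obtain $D_n\le r\,D_{n-1}$ you should apply the contraction to the reordered triple $(x_n,x_{n+1},x_{n+2})$ rather than $(x_{n-1},x_n,x_{n+1})$ (as written your recursion is shifted by one index, with the same conclusion), and the point you flag about $x_{n+2}$ having to satisfy the two preimage conditions $Kx_{n+2}=Tx_n$ and $Sx_{n+2}=Tx_{n+1}$ simultaneously is treated with exactly the same level of detail in the paper, which likewise simply asserts that suitable $x_1,x_2,\dots$ exist from the inclusions.
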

\begin{proof}
Let $x_{0}\in A$. Since $Tx_{0}\in S(C)\subseteq K(B)$ and $T(C)\subseteq S(B)$, there exist $x_{1}\in C, ~x_{2}\in B$ such that
$Tx_{0}=Sx_{1}=Kx_{2}$ and $Tx_{1}=Sx_{2}\in K(A)$. We know that $T(B)\subseteq S(A)$, therefore there exists $x_{3}\in A$
such that $Tx_{1}=Sx_{2}=Kx_{3}$ and $Tx_{2}=Sx_{3}\in K(C)$.

Again, since $T(A)\subseteq S(C)$, there exists $x_{4}\in C$ such that $Tx_{2}=Sx_{3}=Kx_{4}$ and $Tx_{3}=Sx_{4}\in K(B)$.
Since $T(C)\subseteq S(B)$, there exists $x_{5}\in B$ such that $Tx_{3}=Sx_{4}=Kx_{5}$ and $Tx_{4}=Sx_{5}\in K(A)$.

Continuing this process, we obtain a sequence $(x_{n})$ such that
$(x_{3n}), ~(x_{3n+1})$ and $(x_{3n+2})$ are sequences in $A,C$ and $B$, respectively, and $Tx_{n}=Sx_{n+1}=Kx_{n+2}$
for each $n\geq 0$. Since $(T;S;K)$ is a tripartite contraction, we have
\begin{flalign*}
G(Tx_{3n},Tx_{3n+1},Tx_{3n+2})&\leq rG(Sx_{3n},Sx_{3n+1},Sx_{3n+2})=rG(Tx_{3n-1},Tx_{3n},Tx_{3n+1}) \\
&\leq r[rG(Sx_{3n-1},Sx_{3n},Sx_{3n+1})]=r^{2}G(Tx_{3n-2},Tx_{3n-1},Tx_{3n}) \\
&\leq \cdots \leq r^{3n}G(Tx_{0},Tx_{1},Tx_{2}).
\end{flalign*}
Letting $n\rightarrow \infty$, we obtain
\begin{equation*}
G(Tx_{3n},Tx_{3n+1},Tx_{3n+2})\rightarrow 0.
\end{equation*}
\end{proof}

Note that in the above lemma, for each $n\geq 1$, $(x_{3n-1})\subseteq B$ and for each $n\geq 2$, $(x_{3n-2})\subseteq C$.
\begin{lemma}\label{5}
Let $(A,B,C)$ be a nonempty triple of subsets of a $G$-metric space $(X,G)$ and let $(T;S;K)$ be a tripartite contraction on
$A\cup B\cup C$. Suppose that $T(A)\subseteq S(C)\subseteq K(B)$, $T(B)\subseteq S(A)\subseteq K(C)$ and
$T(C)\subseteq S(B)\subseteq K(A)$. For $x_{0}\in A$, define $Tx_{n}=Sx_{n+1}=Kx_{n+2}$
for each $n\geq 0$. Then we have
\begin{equation*}
G(Kx_{3n},Kx_{3n+1},Kx_{3n+2})\rightarrow G(A,B,C).
\end{equation*}
\end{lemma}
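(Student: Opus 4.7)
The plan is to squeeze $b_m := G(Kx_m, Kx_{m+1}, Kx_{m+2})$ between the constant lower bound $G(A,B,C)$ and a geometrically decaying upper bound, then specialize to $m = 3n$. For the lower bound, since $K$ is tripartite noncyclic and Lemma \ref{4} places $x_{3n}$, $x_{3n+1}$, $x_{3n+2}$ in $A$, $C$, $B$ respectively, the triple $(Kx_{3n}, Kx_{3n+1}, Kx_{3n+2})$ also lies in $A \times C \times B$. Total symmetry of $G$ in its three arguments (clause (4) of Definition \ref{1}) then gives $b_{3n} \geq G(A,B,C)$ directly from the definition of $G(A,B,C)$ as an infimum.

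For the upper bound, I plan to derive a one-step recurrence on $b_m$. The identity $Kx_m = Sx_{m-1}$, which is valid for $m \geq 2$ by the construction in Lemma \ref{4}, rewrites
\[
b_m = G(Sx_{m-1}, Sx_m, Sx_{m+1}).
\]
By Lemma \ref{4} the triple $(x_{m-1}, x_m, x_{m+1})$ visits each of $A$, $B$, $C$ exactly once; using symmetry of $G$ to reorder and then applying the first defining inequality of a tripartite contraction, I obtain
\[
b_m \leq r\, G(Kx_{m-1}, Kx_m, Kx_{m+1}) + (1-r)\, G(A,B,C) = r\, b_{m-1} + (1-r)\, G(A,B,C).
\]

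Iterating this recurrence from $m = 2$ up to $m = 3n$ produces the standard estimate
\[
b_{3n} \leq r^{3n-1}\, b_1 + (1 - r^{3n-1})\, G(A,B,C),
\]
whose right-hand side tends to $G(A,B,C)$ as $n \to \infty$ because $r \in (0,1)$. Combined with the lower bound, the squeeze principle yields the claim. The only real obstacle is the index bookkeeping: one must track which of $A$, $B$, $C$ each $x_k$ belongs to and invoke symmetry of $G$ at every step to permute the triple into the $A \times B \times C$ ordering required by the contraction hypothesis. Once this matching is set up, the rest is a routine geometric-series calculation.
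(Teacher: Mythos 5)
Your proposal is correct and follows essentially the same route as the paper: both rewrite $G(Kx_{m},Kx_{m+1},Kx_{m+2})=G(Sx_{m-1},Sx_{m},Sx_{m+1})$ via the defining relation, apply the contraction inequality (after permuting by symmetry of $G$) to obtain the recurrence $b_{m}\leq r\,b_{m-1}+(1-r)\,G(A,B,C)$, and iterate to get geometric convergence to $G(A,B,C)$. Your version is marginally more careful in stopping the iteration at $b_{1}$ (the paper runs it formally down to $G(Kx_{0},Kx_{1},Kx_{2})$) and in making the lower bound $b_{3n}\geq G(A,B,C)$ explicit, but these are cosmetic refinements, not a different argument.
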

\begin{proof}
Since $(T;S;K)$ is a tripartite contraction, we have
\begin{flalign*}
G(Kx_{3n},Kx_{3n+1},Kx_{3n+2})&=G(Sx_{3n-1},Sx_{3n},Sx_{3n+1}) \\
&\leq rG(Kx_{3n-1},Kx_{3n},Kx_{3n+1})+(1-r)G(A,B,C) \\
&=rG(Sx_{3n-2},Sx_{3n-1},Sx_{3n})+(1-r)G(A,B,C) \\
&\leq r[rG(Kx_{3n-2},Kx_{3n-1},Kx_{3n})+(1-r)G(A,B,C)]+(1-r)G(A,B,C) \\
&=r^{2}G(Sx_{3n-3},Sx_{3n-2},Sx_{3n-1})+(1-r^{2})G(A,B,C) \\
&\leq \cdots \leq r^{3n}G(Kx_{0},Kx_{1},Kx_{2})+(1-r^{3n})G(A,B,C).
\end{flalign*}
Letting $n\rightarrow \infty$, we obtain
\begin{equation*}
G(Kx_{3n},Kx_{3n+1},Kx_{3n+2})\rightarrow G(A,B,C).
\end{equation*}
\end{proof}

In the following, we shall establish a theorem on the existence of tripartite coincidence-best proximity point.
\begin{theorem}\label{6}
Let $(A,B,C)$ be a nonempty triple of subsets of a $G$-metric space $(X,G)$ and let $(T;S;K)$ be a tripartite contraction on
$A\cup B\cup C$. Suppose that $T(A)\subseteq S(C)\subseteq K(B)$, $T(B)\subseteq S(A)\subseteq K(C)$ and
$T(C)\subseteq S(B)\subseteq K(A)$ and $K$ is $G$-continuous on $A\cup B\cup C$. For $x_{0}\in A$, define $Tx_{n}=Sx_{n+1}=Kx_{n+2}$ for each $n\geq 0$.
If $(x_{3n})$ has a $G$-convergent subsequence in $A$, then the triple $(T;S;K)$ has a tripartite coincidence-best proximity
point in $A$.
\end{theorem}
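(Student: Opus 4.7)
The plan is to combine the two preceding lemmas with a continuity argument, after rewriting the conclusion of Lemma \ref{5} in terms of the three maps evaluated at a single common point. The key combinatorial observation is that the iterative identity $Tx_n=Sx_{n+1}=Kx_{n+2}$ yields, for every $n\geq 1$,
\begin{equation*}
Kx_{3n+1}=Sx_{3n},\qquad Kx_{3n+2}=Sx_{3n+1}=Tx_{3n}.
\end{equation*}
Consequently the triple $(Kx_{3n},Kx_{3n+1},Kx_{3n+2})$ is literally the same as $(Kx_{3n},Sx_{3n},Tx_{3n})$, so Lemma \ref{5} can be rewritten in the usable form
\begin{equation*}
G(Kx_{3n},Sx_{3n},Tx_{3n})\longrightarrow G(A,B,C).
\end{equation*}

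Now let $(x_{3n_k})$ be the $G$-convergent subsequence guaranteed by hypothesis, with limit $p\in A$. Since $K$ is $G$-continuous on $A\cup B\cup C$, the remark following the definition of tripartite contraction guarantees that $S$ and $T$ are $G$-continuous as well, and therefore
\begin{equation*}
Kx_{3n_k}\to Kp,\qquad Sx_{3n_k}\to Sp,\qquad Tx_{3n_k}\to Tp.
\end{equation*}
The $G$-metric is jointly sequentially continuous (a standard consequence of the rectangle inequality in Definition \ref{1}), so I may pass to the limit inside $G$ along this subsequence to conclude
\begin{equation*}
G(Kp,Sp,Tp)=\lim_{k\to\infty}G(Kx_{3n_k},Sx_{3n_k},Tx_{3n_k})=G(A,B,C).
\end{equation*}
The symmetry axiom of $G$ then gives $G(Kp,Tp,Sp)=G(A,B,C)$, i.e.\ $p\in A$ is a tripartite coincidence-best proximity point of $(T;S;K)$.

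The step I expect to require the most care is the very first one: the rewriting $(Kx_{3n},Kx_{3n+1},Kx_{3n+2})=(Kx_{3n},Sx_{3n},Tx_{3n})$ rests on index shifting that is easy to get off by one, and it is what lets the triple of $K$-values coming out of Lemma \ref{5} be matched against the triple $(K,S,T)$ evaluated at a single argument where a limit is available. Once this identification is secure, the rest is a short continuity argument and no completeness of $X$ is needed, because the existence of a convergent subsequence of $(x_{3n})$ has been imposed directly as a hypothesis. I would also note in passing that membership checks $Kp\in A$, $Tp\in B$, $Sp\in C$ are automatic from the \textbf{RLN} property of the triple, so the equality $G(Kp,Tp,Sp)=G(A,B,C)$ is indeed attained by a legitimate triple in $A\times B\times C$.
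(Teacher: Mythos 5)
Your proposal is correct and is in substance the paper's argument, but the packaging is different enough to be worth noting. Your index identification $Kx_{3n+1}=Sx_{3n}=Tx_{3n-1}$ and $Kx_{3n+2}=Sx_{3n+1}=Tx_{3n}$ (valid for $n\ge 1$, as you say) turns Lemma \ref{5} into the statement $G(Kx_{3n},Sx_{3n},Tx_{3n})\rightarrow G(A,B,C)$, after which you only need continuity of $K$, $S$, $T$ along the convergent subsequence together with the joint sequential continuity of $G$ (which indeed follows from axiom (5) and symmetry, and is standard in the Mustafa--Sims theory). The paper instead keeps the triple in the form $(Kp,Kx_{3n_k+1},Kx_{3n_k+2})$, invokes both Lemma \ref{4} and Lemma \ref{5}, and squeezes $G(Kp,Tp,Sp)$ between $G(A,B,C)$ and a three-term bound obtained from Theorem \ref{2}(6) with $a=Sx_{3n_k}$; your route dispenses with Lemma \ref{4} and with that explicit rectangle-type estimate, which is a genuine (if modest) simplification. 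One point deserves emphasis: both your proof and the paper's rest on the claim that $G$-continuity of $K$ forces $G$-continuity of $S$ and $T$ (the remark after the definition of tripartite contraction); in the paper this is used implicitly to make $G(Sp,Sx_{3n_k},Sx_{3n_k})\rightarrow 0$ and $G(Tp,Tx_{3n_k-1},Tx_{3n_k-2})\rightarrow 0$, and in your proof to get $Sx_{3n_k}\rightarrow Sp$ and $Tx_{3n_k}\rightarrow Tp$. Since the contraction inequalities are only assumed on $A\times B\times C$, that remark is itself not immediate, but as you use exactly the ingredient the paper uses, your argument is on the same footing as the published proof; your closing observations (no completeness needed, and $(Kp,Tp,Sp)\in A\times B\times C$ by the \textbf{RLN} property) are accurate.
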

\begin{proof}
Let $(x_{3n_{k}})$ be a subsequence of $(x_{3n})$ such that $x_{3n_{k}}\rightarrow p\in A$ (in the topology induced by the $G$-metric).
By Lemmas \ref{4} and \ref{5}, if
$k\rightarrow \infty$, we obtain
\begin{equation*}
G(Tp,Tx_{3n_{k}-1},Tx_{3n_{k}-2})\rightarrow 0
\end{equation*}
and
\begin{equation*}
G(Kp,Kx_{3n_{k}+1},Kx_{3n_{k}+2})\rightarrow G(A,B,C).
\end{equation*}
It follows from the continuity of $K$ and Theorem \ref{2} that
\begin{align*}
G(A,B,C)&\leq G(Kp,Tp,Sp)\leq G(Kp,Sx_{3n_{k}},Sx_{3n_{k}})+G(Tp,Sx_{3n_{k}},Sx_{3n_{k}})+G(Sp,Sx_{3n_{k}},Sx_{3n_{k}}) \\
&\leq G(Kp,Kx_{3n_{k}+1},Kx_{3n_{k}+1})+G(Tp,Tx_{3n_{k}-1},Tx_{3n_{k}-1})+G(Sp,Sx_{3n_{k}},Sx_{3n_{k}}) \\
&\leq G(Kp,Kx_{3n_{k}+1},Kx_{3n_{k}+2})+G(Tp,Tx_{3n_{k}-1},Tx_{3n_{k}-2})+G(Sp,Sx_{3n_{k}},Sx_{3n_{k}}).
\end{align*}
Letting $k\rightarrow \infty$, we conclude that
\begin{equation*}
G(Kp,Tp,Sp)=G(A,B,C).
\end{equation*}
\end{proof}

To obtain the second result on the existence of tripartite coincidence-best proximity points, we need some preparations.
Let $(X,G)$ be a $G$-metric space, then a sequence $(x_{n})\subseteq X$ is said to be $G$-bounded if there exists
$x_{0},y_{0}\in X$ and $M>0$ such that for each $n\in \Bbb{N}$ we have
\begin{equation*}
G(x_{n},x_{0},y_{0})\leq M.
\end{equation*}

Note that if $(x_{n})$ be a $G$-bounded sequence, then for all $n,m,l\in \Bbb{N}$ we have
\begin{flalign*}
G(x_{n},x_{m},x_{l})&\leq G(x_{n},x_{0},x_{0})+G(x_{m},x_{0},x_{0})+G(x_{l},x_{0},x_{0}) \\
&\leq G(x_{n},x_{0},y_{0})+G(x_{m},x_{0},y_{0})+G(x_{l},x_{0},y_{0}) \\
&\leq M+M+M=3M.
\end{flalign*}
This implies that
\begin{equation*}
\sup \{G(x_{n},x_{m},x_{l}): ~n,m,l\in \Bbb{N}\}<\infty.
\end{equation*}
\begin{lemma}\label{7}
Let $(A,B,C)$ be a nonempty triple of subsets of a $G$-metric space $(X,G)$ and let $(T;S;K)$ be a tripartite contraction on
$A\cup B\cup C$. Suppose that $T(A)\subseteq S(C)\subseteq K(B)$, $T(B)\subseteq S(A)\subseteq K(C)$ and
$T(C)\subseteq S(B)\subseteq K(A)$, moreover $S$ and $K$ commute on $A\cup C$. For $x_{0}\in A$, define $Tx_{n}=Sx_{n+1}=Kx_{n+2}$
for each $n\geq 0$. Then $(Kx_{3n})$, $(Kx_{3n+1})$ and $(Kx_{3n+2})$ are $G$-bounded sequences in $A,\, C$ and $B$, respectively.
\end{lemma}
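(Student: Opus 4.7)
The plan is to show that $a_n := G(Kx_n, Kx_{n+1}, Kx_{n+2})$ decays geometrically, and then to telescope this via the rectangle inequality into a uniform bound on $G(Kx_m, Kx_0, Kx_0)$. Once such a uniform bound is established, $G$-boundedness of $(Kx_n)_{n \ge 0}$ as a whole follows from the definition, and a fortiori for each of the three subsequences; the claimed placements $(Kx_{3n})\subseteq A$, $(Kx_{3n+1})\subseteq C$, $(Kx_{3n+2})\subseteq B$ are immediate from the noncyclicity of $K$ together with the fact (established in the proof of Lemma~\ref{4}) that $x_{3n}\in A$, $x_{3n+1}\in C$, $x_{3n+2}\in B$.

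First, I would derive a scalar recurrence for $(a_n)$. For each $n \ge 1$ the indices $n, n+1, n+2$ cover all three residue classes modulo~$3$, so the triple $(x_n, x_{n+1}, x_{n+2})$ has one coordinate in each of $A$, $B$, $C$. The symmetry of $G$ in its three arguments then lets me apply the $S$-versus-$K$ part of the tripartite-contraction inequality to this triple; substituting $Sx_m = Kx_{m+1}$ (valid for $m \ge 1$ from the defining relation $Tx_m = Sx_{m+1} = Kx_{m+2}$) yields
\[
a_{n+1} \;\le\; r\, a_n + (1-r)\, G(A,B,C), \qquad n \ge 1.
\]

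Second, I would upgrade this recurrence to genuine geometric decay by proving $G(A,B,C)=0$. The right-cyclicity of $T$ places $Tx_{3n+1}\in A$, $Tx_{3n}\in B$, $Tx_{3n+2}\in C$, hence
\[
G(A,B,C)\;\le\;G(Tx_{3n+1}, Tx_{3n}, Tx_{3n+2})\;=\;G(Tx_{3n}, Tx_{3n+1}, Tx_{3n+2}),
\]
and the right-hand side tends to zero by Lemma~\ref{4}. The recurrence then collapses to $a_{n+1}\le r\,a_n$, yielding $a_n \le r^{n-1}\,a_1$ for $n \ge 1$. Third, I would convert this decay of triples into a uniform bound on individual iterates. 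Using the symmetry of $G$ together with property~(3) of Definition~\ref{1} (or, in the degenerate case $Kx_{k+1}=Kx_{k+2}$, property~(3) of Theorem~\ref{2}), one has $G(Kx_{k+1}, Kx_k, Kx_k) \le 2\, a_k$. Iterating the rectangle inequality (property~(5) of Definition~\ref{1}) telescopes to
\[
G(Kx_m, Kx_0, Kx_0) \;\le\; \sum_{k=0}^{m-1} G(Kx_{k+1}, Kx_k, Kx_k) \;\le\; 2\, a_0 \;+\; \frac{2\, a_1}{1-r},
\]
a finite, $m$-independent bound. With base points $Kx_0, Kx_0$, this is precisely the witness required by the definition of $G$-bounded sequence.

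The central obstacle, to my mind, is the identification $G(A,B,C)=0$: without it, the recurrence from the first step gives only bounded rather than decaying $a_n$, and the telescoping in the third step would diverge and fail to yield an $m$-independent bound. Incidentally, the commutativity of $S$ and $K$ on $A\cup C$ listed among the hypotheses does not appear to enter this argument; presumably it is needed in the main theorem that builds on this lemma rather than in the lemma itself.
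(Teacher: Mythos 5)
Your argument is correct, but it is genuinely different from the paper's. The paper first reduces, via Lemma~\ref{5}, to showing that $(Kx_{3n})$ alone is $G$-bounded, and then argues by contradiction: it fixes a threshold $M$ larger than an explicit constant, picks a ``first crossing'' index $N_{0}$ for the quantity $G(S(Kx_{1}),S(Kx_{2}),Kx_{3N_0+1})$, and derives a contradiction with the choice of $M$ by chaining the two contraction inequalities together with the hypothesis that $S$ and $K$ commute on $A\cup C$ (which is where that hypothesis is actually used). You instead exploit the observation that, under these hypotheses, $G(A,B,C)=0$ is automatic: since $Tx_{3n+1}\in A$, $Tx_{3n}\in B$, $Tx_{3n+2}\in C$ and $G$ is symmetric, $G(A,B,C)\le G(Tx_{3n},Tx_{3n+1},Tx_{3n+2})\to 0$ by Lemma~\ref{4}. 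This works precisely because the paper's definition of tripartite contraction imposes $G(Tx,Ty,Tz)\le rG(Sx,Sy,Sz)$ with no additive $(1-r)G(A,B,C)$ term, so the orbit forces the infimum to vanish; your subsequent steps (the recurrence $a_{n+1}\le ra_n+(1-r)G(A,B,C)$ for $n\ge 1$, its collapse to geometric decay, the bound $G(Kx_{k+1},Kx_k,Kx_k)\le 2a_k$ with the degenerate case handled by Theorem~\ref{2}(3), and the telescoping via the rectangle inequality to an $m$-independent bound on $G(Kx_m,Kx_0,Kx_0)$) are all sound, and the membership claims follow from noncyclicity of $K$ as you say. What each approach buys: yours is more elementary, bounds the whole sequence $(Kx_n)$ at once, and shows the commutativity of $S$ and $K$ is superfluous for this lemma (while incidentally exposing that the hypotheses degenerate the best-proximity setting to $G(A,B,C)=0$); the paper's contradiction argument does not rely on that degeneracy, so it is the template that would survive the natural variant in which the $T$-versus-$S$ inequality also carries a $(1-r)G(A,B,C)$ term (where geometric decay of $a_n$, and hence your telescoping, fails), and it is the argument reused verbatim for semi-contractions in Lemma~\ref{107}.
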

\begin{proof}
Since $G(Kx_{3n},Kx_{3n+1},Kx_{3n+2})\rightarrow G(A,B,C)$, it suffices to show that $(Kx_{3n})$ is $G$-bounded in $A$.
Suppose to the contrary that there exists $N_{0}\in \Bbb{N}$ such that
\begin{equation*}
G(S(Kx_{1}),S(Kx_{2}),Kx_{3_{N_{0}+1}})> M, ~~G(S(Kx_{1}),S(Kx_{2}),Kx_{3_{N_{0}-1}})\leq M,
\end{equation*}
where,
\begin{equation*}
M> \max \left \{\frac{r^{2}}{1-r^{2}}G(K(Kx_{0}),K(Kx_{1}),S(Kx_{1}))+G(A,B,C), ~~G(Sx_{2},Sx_{0},S(Kx_{1}))\right \}.
\end{equation*}
Hence, we have
\begin{flalign*}
\frac{M-G(A,B,C)}{r^{2}}&+G(A,B,C)< \frac{G(S(Kx_{1}),S(Kx_{2}),Kx_{3_{N_{0}+1}})-G(A,B,C)}{r^{2}}+G(A,B,C) \\
&\leq \frac{G(S(Kx_{1}),S(Kx_{2}),Kx_{3_{N_{0}+1}})+(r^{2}-1)G(A,B,C)}{r^{2}} \\
&\leq \frac{G(S(Kx_{1}),S(Kx_{2}),Kx_{3_{N_{0}+1}})+(r^{2}-1)G(S(Kx_{1}),S(Kx_{2}),Kx_{3_{N_{0}+1}})}{r^{2}} \\
&=G(S(Kx_{1}),S(Kx_{2}),Kx_{3_{N_{0}+1}})=G(S(Kx_{1}),S(Kx_{2}),Sx_{3_{N_{0}}}) \\
&\leq G(K(Kx_{1}),K(Kx_{2}),Kx_{3_{N_{0}}})=G(K(Sx_{0}),K(Sx_{1}),Sx_{3_{N_{0}-1}}) \\
&=G(S(Kx_{0}),S(Kx_{1}),Sx_{3_{N_{0}-1}})\leq G(K(Kx_{0}),K(Kx_{1}),Kx_{3_{N_{0}-1}}) \\
&\leq G(K(Kx_{0}),K(Kx_{1}),S(Kx_{1})+G(S(Kx_{1}),S(Kx_{1}),Kx_{3_{N_{0}-1}}) \\
&\leq G(K(Kx_{0}),K(Kx_{1}),S(Kx_{1})+G(S(Kx_{1}),S(Kx_{2}),Kx_{3_{N_{0}-1}}) \\
&\leq G(K(Kx_{0}),K(Kx_{1}),S(Kx_{1})+M
\end{flalign*}
Thus
\begin{equation*}
\frac{M-G(A,B,C)}{r^{2}}+G(A,B,C)< G(K(Kx_{0}),K(Kx_{1}),S(Kx_{1}))+M,
\end{equation*}
and so,
\begin{equation*}
M-(1-r^{2})G(A,B,C)< r^{2}\{G(K(Kx_{0}),K(Kx_{1}),S(Kx_{1}))+M\}.
\end{equation*}
This implies that
\begin{equation*}
M< \frac{r^{2}}{1-r^{2}}G(K(Kx_{0}),K(Kx_{1}),S(Kx_{1}))+G(A,B,C),
\end{equation*}
which is a contradiction with the choice of $M$.
\end{proof}
\begin{definition}
Let $(A,B,C)$ be a nonempty triple of subsets of a $G$-metric space $(X,G)$. A mapping $K:A\cup B\cup C \rightarrow
A\cup B\cup C \rightarrow$ is said to be a \textsl{tripartite relatively anti-Lipschitzian mapping} if there exists $c>0$ such
that for all $(x,y,z)\in A\times B\times C$ we have
\begin{equation*}
G(x,y,z)\leq c\,G(Kx,Ky,Kz).
\end{equation*}
\end{definition}
The next theorem is a straightforward consequence of Theorem \ref{6} and Lemma \ref{7}. We just recall that a subset $A$ in a $G$-metric space $(X,G)$
is said to be $G$-compact if every $G$-bounded sequence in $A$ has a $G$-convergent subsequence in $A$.
\begin{theorem}\label{11}
Let $(A,B,C)$ be a nonempty triple of subsets of a $G$-metric space $(X,G)$ such that $A$ is $G$-sequentially compact, and let
$(T;S;K)$ be a tripartite contraction on $A\cup B\cup C$. Suppose that $T(A)\subseteq S(C)\subseteq K(B)$,
$T(B)\subseteq S(A)\subseteq K(C)$ and $T(C)\subseteq S(B)\subseteq K(A)$, moreover $S$ and $K$ commute on $A\cup C$. If $K$ is
tripartite relatively anti-Lipschitzian and $G$-continuous on $A\cup B\cup C$, then the triple $(T;S;K)$ has a
tripartite coincidence-best proximity point in $A$.
\end{theorem}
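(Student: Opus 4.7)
The plan is to verify that all hypotheses of Theorem \ref{6} are met, using Lemma \ref{7} and the $G$-sequential compactness of $A$ to supply the missing ingredient. Starting from any $x_{0}\in A$, Lemma \ref{4} produces a sequence $(x_{n})$ with $Tx_{n}=Sx_{n+1}=Kx_{n+2}$ and with $(x_{3n})\subseteq A$, $(x_{3n+1})\subseteq C$, $(x_{3n+2})\subseteq B$. Because $S$ and $K$ commute on $A\cup C$, Lemma \ref{7} applies and tells us that the three sequences $(Kx_{3n})$, $(Kx_{3n+1})$, $(Kx_{3n+2})$ are all $G$-bounded in $A$, $C$, $B$ respectively.

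Next I would exploit the tripartite relatively anti-Lipschitzian property of $K$ to transfer this $G$-boundedness back to $(x_{3n})$ itself. Concretely, fix indices $m,\ell$ and apply the defining inequality of a tripartite relatively anti-Lipschitzian mapping to the triple $(x_{3n},x_{3m+2},x_{3\ell+1})\in A\times B\times C$, which gives
\begin{equation*}
G(x_{3n},x_{3m+2},x_{3\ell+1})\leq c\,G(Kx_{3n},Kx_{3m+2},Kx_{3\ell+1}).
\end{equation*}
By Lemma \ref{7} together with the observation recorded just before that lemma (that a $G$-bounded sequence has uniformly bounded triple $G$-values), the right-hand side is uniformly bounded in $n$ by some constant $M'$. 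Setting $x_{0}':=x_{3m+2}$ and $y_{0}':=x_{3\ell+1}$, this is precisely the $G$-boundedness of $(x_{3n})$ as a sequence in $A$.

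Since $A$ is $G$-sequentially compact, the $G$-bounded sequence $(x_{3n})\subseteq A$ admits a $G$-convergent subsequence with limit $p\in A$. At this point all the hypotheses of Theorem \ref{6} are in force: $(T;S;K)$ is a tripartite contraction with the required inclusion conditions, $K$ is $G$-continuous on $A\cup B\cup C$, and $(x_{3n})$ has a $G$-convergent subsequence in $A$. Applying Theorem \ref{6} immediately yields a tripartite coincidence-best proximity point of $(T;S;K)$ in $A$.

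The only delicate point is the boundedness-transfer in the middle paragraph: one must select the auxiliary indices so that the triple genuinely lies in $A\times B\times C$, since that is the only domain on which the anti-Lipschitzian inequality is postulated. This forces the use of three distinct residue classes modulo $3$, which is why I pair $x_{3n}\in A$ with fixed representatives $x_{3m+2}\in B$ and $x_{3\ell+1}\in C$ rather than trying to compare $x_{3n}$ directly with other elements of $(x_{3n})$. Once this is set up correctly, the rest is a routine composition of the lemmas already proved.
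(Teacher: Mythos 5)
Your proposal is correct and follows essentially the same route as the paper, which simply declares the theorem a straightforward consequence of Theorem \ref{6} and Lemma \ref{7} together with the definition of $G$-sequential compactness. The boundedness-transfer step you spell out, pairing $x_{3n}\in A$ with fixed points $x_{3m+2}\in B$ and $x_{3\ell+1}\in C$ and using the tripartite relatively anti-Lipschitzian property of $K$, is precisely the detail the paper leaves implicit, and your handling of it is sound.
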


We now will illustrate Theorem \ref{11} with the following examples.
\begin{example}
Consider $X:=[0,+\infty)$ with the usual metric. We have already seen that the function $G:X\times X\times X\rightarrow [0,+\infty)$ defined by
\begin{equation*}
G(x,y,z)=|x-y|+|y-z|+|z-x|,
\end{equation*}
for all $x,y,z\in X$, is a $G$-metric on $X$. For $A=[0,1], B=[0,2]$ and $C=[0,3]$ we have $G(A,B,C)=0.$ We define
\begin{equation*}
Tx:=\frac{1}{4}\sin x, ~~Sx:=\frac{1}{2}\sin x, ~~Kx:=x, ~~\forall x\in A\cup B\cup C.
\end{equation*}
Then for each $(x,y,z)\in A\times B\times C$, we have
\begin{flalign*}
G(Tx,Ty,Tz)&=|\frac{1}{4}\sin x-\frac{1}{4}\sin y|+|\frac{1}{4}\sin y-\frac{1}{4}\sin z|+|\frac{1}{4}\sin z-\frac{1}{4}\sin x| \\
&=\frac{1}{4}(|\sin x-\sin y|+|\sin y-\sin z|+|\sin z-\sin x|) \\
&\leq \frac{3}{4}[\frac{1}{2}(|\sin x-\sin y|+|\sin y-\sin z|+|\sin z-\sin x|)] \\
&=\frac{3}{4}G(Sx,Sy,Sz).
\end{flalign*}
In addition, for each $(x,y,z)\in A\times B\times C$, we obtain
\begin{flalign*}
G(Sx,Sy,Sz)&=|\frac{1}{2}\sin x-\frac{1}{2}\sin y|+|\frac{1}{2}\sin y-\frac{1}{2}\sin z|+|\frac{1}{2}\sin z-\frac{1}{2}\sin x| \\
&=\frac{1}{2}(|\sin x-\sin y|+|\sin y-\sin z|+|\sin z-\sin x|) \\
&\leq \frac{3}{4}(|x-y|+|y-z|+|z-x|)+\frac{1}{4}G(A,B,C) \\
&=\frac{3}{4}G(Kx,Ky,Kz)+\frac{1}{4}G(A,B,C).
\end{flalign*}
This implies that $(T;S;K)$ is a tripartite contraction with $r=\frac{3}{4}$. Also, $T(A)\subseteq S(C)\subseteq K(B)$,
$T(B)\subseteq S(A)\subseteq K(C)$ and $T(C)\subseteq S(B)\subseteq K(A)$. Moreover, $K$ is $G$-continuous on
$A\cup B\cup C$ and $A$ is $G$-sequentially compact in $X$.
Besides, $K$ is tripartite relatively anti-Lipschitzian on $A\cup B\cup C$ with $c=2$. In fact, for all $(x,y,z)\in A\times B\times C$
we have
\begin{equation*}
G(x,y,z)=|x-y|+|y-z|+|z-x|\leq 2(|x-y|+|y-z|+|z-x|)=2G(Kx,Ky,Kz).
\end{equation*}
Finally, for each $x\in A\cup C$ we have
\begin{equation*}
K(Sx)=K(\frac {1}{2}\sin x)=\frac{1}{2}\sin x=Sx=S(Kx),
\end{equation*}
that is, $S$ and $K$ commute on $A \cup C$. Thereby, the existence of tripartite coincidence-best proximity point for
$(T;S;K)$ follows from Theorem \ref{11}. That is, there exists $p\in A$ such that
\begin{equation*}
G(Kp,Tp,Sp)=G(A,B,C)
\end{equation*}
or
\begin{equation*}
G(p,\frac{1}{4}\sin p,\frac{1}{2}\sin p)=0.
\end{equation*}
Therefore,
\begin{equation*}
p=\frac{1}{4}\sin p=\frac{1}{2}\sin p,
\end{equation*}
which implies that $p=0$.
\end{example}
\begin{example}
Let $X:=[0,+\infty)$ with the usual metric. Again $G:X\times X\times X\rightarrow [0,+\infty)$ defined by
\begin{equation*}
G(x,y,z)=|x-y|+|y-z|+|z-x|,
\end{equation*}
for all $x,y,z\in X$, is a $G$-metric on $X$. For $A=[0,1], B=[0,2]$ and $C=[0,3]$ we have $G(A,B,C)=0$; now we define
\begin{equation*}
Tx:=\frac{1}{3}\sin x, ~~Sx:=\frac{1}{2}x, ~~Kx:=x, ~~\forall x\in A\cup B\cup C.
\end{equation*}
Then for each $(x,y,z)\in A\times B\times C$, we have
\begin{flalign*}
G(Tx,Ty,Tz)&=|\frac{1}{3}\sin x-\frac{1}{3}\sin y|+|\frac{1}{3}\sin y-\frac{1}{3}\sin z|+|\frac{1}{3}\sin z-\frac{1}{3}\sin x| \\
&=\frac{1}{3}(|\sin x-\sin y|+|\sin y-\sin z|+|\sin z-\sin x|) \\
&\leq \frac{1}{3}(|x-y|+|y-z|+|z-x|)=\frac{2}{3}[\frac{1}{2}(|x-y|+|y-z|+|z-x|)] \\
&=\frac{2}{3}G(Sx,Sy,Sz)\leq \frac{5}{6}G(Sx,Sy,Sz).
\end{flalign*}
In addition, for each $(x,y,z)\in A\times B\times C$, we have
\begin{flalign*}
G(Sx,Sy,Sz)&=|\frac{1}{2}x-\frac{1}{2}y|+|\frac{1}{2}y-\frac{1}{2}z|+|\frac{1}{2}z-\frac{1}{2}x| \\
&=\frac{1}{2}(|x-y|+|y-z|+|z-x|)=\frac{1}{2}G(Kx,Ky,Kz) \\
&\leq \frac{5}{6}G(Kx,Ky,Kz)\leq \frac{5}{6}G(Kx,Ky,Kz)+\frac{1}{6}G(A,B,C).
\end{flalign*}
This implies that $(T;S;K)$ is a tripartite contraction with $r=\frac{5}{6}$. Also, $T(A)\subseteq S(C)\subseteq K(B)$,
$T(B)\subseteq S(A)\subseteq K(C)$ and $T(C)\subseteq S(B)\subseteq K(A)$. Moreover, $K$ is $G$-continuous on
$A\cup B\cup C$ and $A$ is $G$-sequentially compact in $X$.
Besides, $K$ is tripartite relatively anti-Lipschitzian on $A\cup B\cup C$ with $c=2$. In fact, for all $(x,y,z)\in A\times B\times C$
we have
\begin{equation*}
G(x,y,z)=|x-y|+|y-z|+|z-x|\leq 2(|x-y|+|y-z|+|z-x|)=2G(Kx,Ky,Kz).
\end{equation*}
Finally, for each $x\in A\cup C$ we have
\begin{equation*}
K(Sx)=K(\frac {1}{2}x)=\frac{1}{2}x=Sx=S(Kx),
\end{equation*}
that is, $S$ and $K$ commute on $A \cup C$. Thereby, the existence of tripartite coincidence-best proximity point for
$(T;S;K)$ follows from Theorem \ref{11}. That is, there exists $p\in A$ such that
\begin{equation*}
G(Kp,Tp,Sp)=G(A,B,C)
\end{equation*}
or
\begin{equation*}
G(p,\frac{1}{3}\sin p,\frac{1}{2}p)=0.
\end{equation*}
Therefore,
\begin{equation*}
p=\frac{1}{3}\sin p=\frac{1}{2}p,
\end{equation*}
which implies that $p=0$.
\end{example}
So far, we have been dealing with the existence of tripartite coincidence-best proximity points of tripartite contractions. Now we want to approximate
these points. To achieve this goal, we need the convex structure of $G$-metric space.
We begin with the following lemma.
\begin{lemma}\label{8}
Let $(A,B,C)$ be a nonempty triple of subsets of a uniformly convex $G$-metric space $(X,G;\Bbb{W})$ such that $A$ is $G$-convex.
Let $(T;S;K)$ be a tripartite contraction on $A\cup B\cup C$ such that $T(A)\subseteq S(C)\subseteq K(B)$,
$T(B)\subseteq S(A)\subseteq K(C)$ and $T(C)\subseteq S(B)\subseteq K(A)$. For $x_{0}\in A$, define $Tx_{n}=Sx_{n+1}=Kx_{n+2}$
for each $n\geq 0$. Then
\begin{align*}
G(Kx_{3n},Kx_{3n+3},Kx_{3n+6})\rightarrow 0,
\end{align*}
\begin{align*}
G(Kx_{3n+1},Kx_{3n+4},Kx_{3n+7})\rightarrow 0,
\end{align*}
and
\begin{equation*}
G(Kx_{3n+2},Kx_{3n+5},Kx_{3n+8})\rightarrow 0.
\end{equation*}
\end{lemma}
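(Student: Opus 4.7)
The proof is by contradiction, following the uniform-convexity clustering strategy familiar from cyclic contraction theory, adapted to the tripartite $G$-metric setting. I describe the argument for the first limit in detail; the other two are handled analogously.

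Suppose $G(Kx_{3n}, Kx_{3n+3}, Kx_{3n+6}) \not\to 0$. Then there exist $\varepsilon_0 > 0$ and a subsequence $(n_k)$ with $G(Kx_{3n_k}, Kx_{3n_k+3}, Kx_{3n_k+6}) \geq \varepsilon_0$ for all $k$. Since $K$ is tripartite noncyclic and $x_{3m} \in A$ for every $m \geq 0$ by the parity analysis in Lemma \ref{4}, the three points above all lie in $A$, while $Kx_{3n+1} \in C$ and $Kx_{3n+2} \in B$. Writing $d := G(A,B,C)$, I would next verify the three companion limits
\[
\lim_{n \to \infty} G(Kx_{3n+1}, Kx_{3n+2}, Kx_{3n+3k}) = d, \qquad k = 0, 1, 2.
\]
The case $k = 0$ is exactly Lemma \ref{5}. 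For $k = 1$, the chain identities $Kx_{3n+1} = Sx_{3n}$, $Kx_{3n+2} = Sx_{3n+1}$, $Kx_{3n+3} = Sx_{3n+2}$ rewrite the distance as $G(Sx_{3n}, Sx_{3n+1}, Sx_{3n+2})$; after reordering the arguments into $A \times B \times C$ and invoking the first contraction inequality, the upper bound $r\,G(Kx_{3n}, Kx_{3n+1}, Kx_{3n+2}) + (1-r)d$ tends to $d$ by Lemma \ref{5}, while the lower bound $d$ is automatic from the positions $C, B, A$ of the three points. The case $k = 2$ is handled either by iterating this estimate once more using $Kx_{3n+6} = Sx_{3n+5}$ or by combining the $k = 1$ case with the rectangle inequality of Theorem \ref{2} and the exponential decay of $G(Tx_n, Tx_{n+1}, Tx_{n+2})$ from Lemma \ref{4}.

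Next, choose $r_k \searrow d$ so that $G(Kx_{3n_k+1}, Kx_{3n_k+2}, Kx_{3n_k+3j}) \leq r_k$ for $j = 0, 1, 2$, and set $\varepsilon_k := \varepsilon_0 / r_k$, which gives $G(Kx_{3n_k}, Kx_{3n_k+3}, Kx_{3n_k+6}) \geq r_k \varepsilon_k$. Uniform convexity then furnishes $\alpha_k = \alpha(\varepsilon_k) > 0$ with
\[
G\bigl(Kx_{3n_k+1}, Kx_{3n_k+2}, \mathbb{W}(Kx_{3n_k}, Kx_{3n_k+3}, Kx_{3n_k+6}; \tfrac{1}{3}, \tfrac{1}{3}, \tfrac{1}{3})\bigr) \leq r_k(1 - \alpha_k).
\]
Because $A$ is $G$-convex and each of $Kx_{3n_k}, Kx_{3n_k+3}, Kx_{3n_k+6}$ lies in $A$, the centroid $\mathbb{W}(\cdots)$ also lies in $A$; combined with $Kx_{3n_k+1} \in C$ and $Kx_{3n_k+2} \in B$, the left-hand side is at least $d$. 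Passing to the limit $k \to \infty$ yields $d \leq d(1 - \alpha_\infty) < d$, the desired contradiction.

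The most delicate step is the $k = 2$ case of the companion limits, since the triple $(Kx_{3n+1}, Kx_{3n+2}, Kx_{3n+6})$ does not descend from a single application of the contraction on an $A \times B \times C$ triple and requires careful index bookkeeping via the rectangle inequality. A secondary subtlety is that the conclusions for the $C$-valued sequence $(Kx_{3n+1})$ and the $B$-valued sequence $(Kx_{3n+2})$ cannot rely on $G$-convexity, which the lemma assumes only for $A$; I would address these by transferring the $A$-conclusion through the identities $Kx_{n+2} = Sx_{n+1} = Tx_n$ together with rectangle-inequality estimates rather than by invoking uniform convexity a second time.
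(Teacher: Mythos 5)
Your proposal is correct and follows essentially the same route as the paper's own proof: argue by contradiction, use Lemma \ref{5} to make $G(Kx_{3n_k+2},Kx_{3n_k+1},\cdot)$ nearly equal to $G(A,B,C)$ at the three $A$-points $Kx_{3n_k},Kx_{3n_k+3},Kx_{3n_k+6}$, and apply uniform convexity to the centroid $\Bbb{W}(Kx_{3n_k},Kx_{3n_k+3},Kx_{3n_k+6};\frac{1}{3},\frac{1}{3},\frac{1}{3})$, which lies in $A$ by $G$-convexity, to force $G(A,B,C)<G(A,B,C)$. If anything you are more careful than the paper, which attributes the bounds involving $Kx_{3n_k+3}$ and $Kx_{3n_k+6}$ to Lemma \ref{5} without proving the companion limits you supply; your remaining loose ends (keeping $\alpha(\varepsilon_k)$ bounded away from $0$, which follows by fixing the single threshold $\varepsilon_0/r_1$, and the degenerate case $G(A,B,C)=0$, where the final strict inequality yields no contradiction) are no worse than the corresponding points in the paper's argument.
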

\begin{proof}
We want to show that $G(Kx_{3n},Kx_{3n+3},Kx_{3n+6})\rightarrow 0$ as $n\to \infty$. Suppose to the contrary that there exists $\varepsilon_{0}> 0$ such
that for each $k\geq 1$, there exists $n_{k}\geq k$ such that
$$G(Kx_{3n_{k}},Kx_{3n_{k}+3},Kx_{3n_{k}+6})\geq \varepsilon_{0}.$$
Choose
$0<\gamma <1$ such that $\frac{\varepsilon_{0}}{\gamma}>G(A,B,C)$ and choose $\varepsilon >0$ such that
\begin{equation*}
0< \min \left\{\frac{\varepsilon_{0}}{\gamma}-G(A,B,C), ~\frac{G(A,B,C)\alpha(\gamma)}{1-\alpha(\gamma)}\right\}.
\end{equation*}
By Lemma \ref{5}, since $G(Kx_{3n_{k}},Kx_{3n_{k}+1},Kx_{3n_{k}+2})\rightarrow G(A,B,C)$, there exists $N\in \Bbb{N}$ such that
\begin{align*}
G(Kx_{3n_{k}+2},Kx_{3n_{k}+1},Kx_{3n_{k}})\leq G(A,B,C)+\varepsilon, \\
G(Kx_{3n_{k}+2},Kx_{3n_{k}+1},Kx_{3n_{k}+3})\leq G(A,B,C)+\varepsilon,
\end{align*}
\begin{equation*}
G(Kx_{3n_{k}+2},Kx_{3n_{k}+1},Kx_{3n_{k}+6})\leq G(A,B,C)+\varepsilon,
\end{equation*}
and
\begin{equation*}
G(Kx_{3n_{k}},Kx_{3n_{k}+3},Kx_{3n_{k}+6})\geq \varepsilon_{0}> \gamma(G(A,B,C)+\varepsilon).
\end{equation*}
It now follows from the uniform convexity of $(X,G)$ and the $G$-convexity of $A$ that
\begin{flalign*}
G(A,B,C)&\leq G(Kx_{3n_{k}+2},Kx_{3n_{k}+1},\Bbb{W}(Kx_{3n_{k}},Kx_{3n_{k}+3},Kx_{3n_{k}+6},\frac{1}{3},\frac{1}{3},\frac{1}{3})) \\
&\leq (G(A,B,C)+\varepsilon)(1-\alpha(\gamma)) \\
&< G(A,B,C)+\frac{G(A,B,C)\alpha(\gamma)}{1-\alpha(\gamma)}(1-\alpha(\gamma))=G(A,B,C).
\end{flalign*}
which is a contradiction. Similarly, we see that
\begin{align*}
G(Kx_{3n+1},Kx_{3n+4},Kx_{3n+7})\rightarrow 0, \\ G(Kx_{3n+2},Kx_{3n+5},Kx_{3n+8})\rightarrow 0.
\end{align*}
This completes the proof.
\end{proof}
\begin{theorem}\label{9}
Let $(A,B,C)$ be a triple of nonempty, closed subsets of a complete uniformly convex $G$-metric space $(X,G,\Bbb{W})$
such that $A$ is $G$-convex. Let $(T;S;K)$ be a tripartite contraction on $A\cup B\cup C$ such that
 $T(A)\subseteq S(C)\subseteq K(B)$, $T(B)\subseteq S(A)\subseteq K(C)$ and $T(C)\subseteq S(B)\subseteq K(A)$ and that
 $K$ is $G$-continuous and tripartite relatively anti-Lipschitzian on $A\cup B\cup C$. Then
 $(T;S;K)$ has a tripartite coincidence-best proximity point in $A$. Moreover, if $x_{0}\in A$ and $Tx_{n}=Sx_{n+1}=Kx_{n+2}$,
 then $(x_{3n})$ $G$-converges to the tripartite coincidence-best proximity point of $(T;S;K)$.
\end{theorem}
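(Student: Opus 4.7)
My plan is to adapt the Eldred--Veeramani strategy for cyclic contractions in uniformly convex Banach spaces to the present tripartite $G$-metric setting. The existence part will be delivered by Theorem \ref{6} once I produce a $G$-convergent subsequence of $(x_{3n})$ in $A$; the core technical work is to show that $(Kx_{3n})$ is $G$-Cauchy, and then to transfer this to $(x_{3n})$ itself by means of the tripartite anti-Lipschitzian property of $K$.

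First I invoke Lemma \ref{4} to construct $(x_n)$ with $(x_{3n})\subset A$, $(x_{3n+1})\subset C$, $(x_{3n+2})\subset B$, satisfying $Tx_n=Sx_{n+1}=Kx_{n+2}$, and collect the three asymptotic facts already in hand: $G(Tx_{3n},Tx_{3n+1},Tx_{3n+2})\to 0$ (Lemma \ref{4}), $G(Kx_{3n},Kx_{3n+1},Kx_{3n+2})\to G(A,B,C)$ (Lemma \ref{5}), and $G(Kx_{3j},Kx_{3j+3},Kx_{3j+6})\to 0$ together with its two shifted companions (Lemma \ref{8}).

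The heart of the proof is to upgrade Lemma \ref{8} to a Cauchy statement for $(Kx_{3n})$. Arguing by contradiction in the same spirit as Lemma \ref{8}, if $(Kx_{3n})$ is not $G$-Cauchy there exist $\varepsilon_{0}>0$ and indices $n_{k}<m_{k}<l_{k}$ tending to infinity with $G(Kx_{3n_{k}},Kx_{3m_{k}},Kx_{3l_{k}})\ge\varepsilon_{0}$. Iterating the tripartite contraction inequality across blocks of three consecutive indices, starting from triples of the form $(x_{3p},x_{3q+1},x_{3s+2})\in A\times C\times B$, one shows that for $k$ large the anchor pair $Kx_{3n_{k}+1}\in C$, $Kx_{3n_{k}+2}\in B$ is simultaneously within $G(A,B,C)+\varepsilon$ of each of $Kx_{3n_{k}},Kx_{3m_{k}},Kx_{3l_{k}}$. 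The $G$-convexity of $A$ then places the average $w_{k}:=\Bbb{W}(Kx_{3n_{k}},Kx_{3m_{k}},Kx_{3l_{k}};\frac{1}{3},\frac{1}{3},\frac{1}{3})$ in $A$, so for a suitable choice of $\gamma,\varepsilon$ uniform convexity forces $G(Kx_{3n_{k}+1},Kx_{3n_{k}+2},w_{k})<G(A,B,C)$, contradicting the definition of $G(A,B,C)$. I expect this bookkeeping -- obtaining the three anchor bounds simultaneously -- to be the main obstacle. Parallel arguments yield the Cauchy property for $(Kx_{3n+1})$ and $(Kx_{3n+2})$ as well.

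Completeness of $X$ together with the closedness of $A,B,C$ then supplies limits $Kx_{3n}\to q\in A$, $Kx_{3n+1}\to q_{C}\in C$, $Kx_{3n+2}\to q_{B}\in B$. The tripartite anti-Lipschitzian property, applied to cross-residue triples in $A\times B\times C$ and combined with $G$-continuity of $K$ and the rectangle inequality, lets me conclude that $(x_{3n})$ is itself $G$-Cauchy and hence converges to some $p\in A$ with $Kp=q$. The hypothesis of Theorem \ref{6} is now in place with $(x_{3n})$ playing the role of its own convergent subsequence, so Theorem \ref{6} delivers $G(Kp,Tp,Sp)=G(A,B,C)$; that is, $p$ is the desired tripartite coincidence-best proximity point of $(T;S;K)$, and $(x_{3n})\to p$ is the advertised convergence.
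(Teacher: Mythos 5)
Your overall architecture coincides with the paper's: show $(Kx_{3n})$ is $G$-Cauchy by contradicting uniform convexity with three simultaneous ``anchor'' bounds, transfer the Cauchy property to $(x_{3n})$ via the anti-Lipschitzian hypothesis, let closedness and completeness produce $p\in A$, and finish with the argument of Theorem \ref{6} applied to $(x_{3n})$ as its own convergent subsequence. However, there is a genuine gap exactly at the step you flag as the main obstacle. The three anchor bounds are the paper's auxiliary claim $(*)$: for every $\varepsilon>0$ there is $N_0$ with $G(Kx_{3m},Kx_{3n+1},Kx_{3n+2})<G(A,B,C)+\varepsilon$ for all $m>n\geq N_0$ (the bound anchored at $Kx_{3n_k}$ comes from Lemma \ref{5}; those at $Kx_{3m_k}$ and $Kx_{3l_k}$ need $(*)$). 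Your proposed route, ``iterating the tripartite contraction across blocks of three consecutive indices,'' does not deliver this: each application of the contraction lowers all three indices by one, so when the two small indices bottom out you are left with a term of the form $r^{3n}G(Kx_{3(m-n)},Kx_{0},Kx_{1})$ (up to index shifts) plus $(1-r^{3n})G(A,B,C)$, and controlling it uniformly in $m-n$ requires $G$-boundedness of the orbit $(Kx_j)$ --- which in the paper is Lemma \ref{7} and needs $S$ and $K$ to commute, a hypothesis not available in Theorem \ref{9}. The paper instead proves $(*)$ by contradiction with a minimality choice: pick $m_k>n_k$ with $G(Kx_{3m_k},Kx_{3n_k+1},Kx_{3n_k+2})\geq G(A,B,C)+\varepsilon_0$ but $G(Kx_{3m_k-3},Kx_{3n_k+1},Kx_{3n_k+2})<G(A,B,C)+\varepsilon_0$, sandwich via the contraction, the rectangle inequality and Lemmas \ref{4} and \ref{8} to force $G(Kx_{3m_k},Kx_{3n_k+1},Kx_{3n_k+2})\rightarrow G(A,B,C)+\varepsilon_0$, and then run the contraction chain once more to get $G(A,B,C)+\varepsilon_0\leq r(G(A,B,C)+\varepsilon_0)+(1-r)G(A,B,C)$, forcing $r=1$, a contradiction. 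That idea is missing from your plan.

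A smaller but substantive point: in the transfer step you propose to apply the anti-Lipschitz condition to cross-residue triples in $A\times B\times C$; those $G$-values only tend to $c\,G(A,B,C)$, not to $0$, so they cannot certify that $(x_{3n})$ is $G$-Cauchy when $G(A,B,C)>0$. The paper applies the inequality $G(x_{3l},x_{3m},x_{3n})\leq c\,G(Kx_{3l},Kx_{3m},Kx_{3n})$ directly to terms of the single sequence $(x_{3n})$, whose $K$-images have just been shown to be $G$-Cauchy. Apart from these two points, your outline (convexity of $A$ placing the $\Bbb{W}$-average in $A$, uniform convexity contradicting the definition of $G(A,B,C)$, and Theorem \ref{6} closing the argument) matches the paper's proof.
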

\begin{proof}
For $x_{0}\in A$ define $Tx_{n}=Sx_{n+1}=Kx_{n+2}$ for each $n\geq 0$. We prove that $(Kx_{3n})$, $(Kx_{3n+1})$ and $(Kx_{3n+2})$
are $G$-Cauchy sequences. At first, we verify that for each $\varepsilon >0$ there exists $N_{0}\in \Bbb{N}$ such that
\begin{equation*}
G(Kx_{3m},Kx_{3n+1},Kx_{3n+2})< G(A,B,C)+\varepsilon, ~~\forall m>n\geq N_{0}. ~~~~~(*)
\end{equation*}
Assume the contrary. Then there exists $\varepsilon_{0}>0$ such that for each $k\geq 1$ there exists $m_{k}>n_{k}\geq k$
satisfying
\begin{align*}
G(Kx_{3m_{k}},Kx_{3n_{k}+1},Kx_{3n_{k}+2})\geq G(A,B,C)+\varepsilon_{0},\end{align*}
and
\begin{align*}
G(Kx_{3m_{k}-3},Kx_{3n_{k}+1},Kx_{3n_{k}+2})< G(A,B,C)+\varepsilon_{0}.
\end{align*}
Now, we have
\begin{flalign*}
G(A,B,C)+\varepsilon_{0}&\leq G(Kx_{3m_{k}},Kx_{3n_{k}+1},Kx_{3n_{k}+2})=G(Sx_{3m_{k}-1},Sx_{3n_{k}},Sx_{3n_{k}+1}) \\
&\leq G(Kx_{3m_{k}-1},Kx_{3n_{k}},Kx_{3n_{k}+1})=G(Sx_{3m_{k}-2},Sx_{3n_{k}-1},Sx_{3n_{k}}) \\
&\leq G(Kx_{3m_{k}-2},Kx_{3n_{k}-1},Kx_{3n_{k}})=G(Sx_{3m_{k}-3},Sx_{3n_{k}-2},Sx_{3n_{k}-1}) \\
&\leq G(Kx_{3m_{k}-3},Kx_{3n_{k}-2},Kx_{3n_{k}-1}) \\
&\leq G(Kx_{3m_{k}-3},Kx_{3n_{k}+1},Kx_{3n_{k}+1})+G(Kx_{3n_{k}-2},Kx_{3n_{k}+1},Kx_{3n_{k}+1}) \\
&+G(Kx_{3n_{k}-1},Kx_{3n_{k}+1},Kx_{3n_{k}+1}) \\
&\leq G(Kx_{3m_{k}-3},Kx_{3n_{k}+1},Kx_{3n_{k}+2})+G(Kx_{3n_{k}-2},Kx_{3n_{k}+1},Kx_{3n_{k}+4}) \\
&+ G(Kx_{3n_{k}-1},Kx_{3n_{k}},Kx_{3n_{k}+1}) \\
&= G(Kx_{3m_{k}-3},Kx_{3n_{k}+1},Kx_{3n_{k}+2})+G(Kx_{3n_{k}-2},Kx_{3n_{k}+1},Kx_{3n_{k}+4}) \\
&+ G(Tx_{3n_{k}-3},Tx_{3n_{k}-2},Tx_{3n_{k}-1}).
\end{flalign*}
Letting $k\rightarrow \infty$, and using the hypothesis together with Lemmas \ref{4} and \ref{8} we obtain
\begin{equation*}
G(Kx_{3m_{k}},Kx_{3n_{k}+1},Kx_{3n_{k}+2})\rightarrow G(A,B,C)+\varepsilon_{0}.
\end{equation*}
Besides,
\begin{flalign*}
G(A,B,C)+\varepsilon_{0}&\leq G(Kx_{3m_{k}},Kx_{3n_{k}+1},Kx_{3n_{k}+2})=G(Tx_{3m_{k}-2},Tx_{3n_{k}-1},Tx_{3n_{k}}) \\
&\leq rG(Sx_{3m_{k}-2},Sx_{3n_{k}-1},Sx_{3n_{k}})\leq rG(Kx_{3m_{k}-2},Kx_{3n_{k}-1},Kx_{3n_{k}}) \\
&=rG(Sx_{3m_{k}-3},Sx_{3n_{k}-2},Sx_{3n_{k}-1})\leq rG(Kx_{3m_{k}-3},Kx_{3n_{k}-2},Kx_{3n_{k}-1}) \\
&\leq rG(Kx_{3m_{k}-3},Kx_{3n_{k}-2},Kx_{3n_{k}-1})+(1-r)G(A,B,C).
\end{flalign*}
Letting $k\rightarrow \infty$, we conclude that
\begin{equation*}
G(A,B,C)+\varepsilon_{0}\leq r(G(A,B,C)+\varepsilon_{0})+(1-r)G(A,B,C)\leq G(A,B,C)+\varepsilon_{0}.
\end{equation*}
This implies that $r=1$, which is a contradiction. That is, $(*)$ holds. Similarly, we see that
\begin{equation*}
G(Kx_{3l_{k}},Kx_{3l_{k}+1},Kx_{3l_{k}+2})< G(A,B,C)+\varepsilon_{0}.
\end{equation*}
Now, suppose $(Kx_{3n})$ is not a $G$-Cauchy sequence. Then there exists $\varepsilon_{0}> 0$ such that for each $k\geq 1$ there exist
$l_{k}> m_{k}> n_{k}\geq k$ that $G(Kx_{3l_{k}},Kx_{3m_{k}},Kx_{3n_{k}})\geq \varepsilon_{0}$. Choose $0< \gamma < 1$
such that $\frac{\varepsilon_{0}}{\gamma}> G(A,B,C)$, and choose $\varepsilon >0$ such that
\begin{equation*}
0< \varepsilon < \min\left \{\frac{\varepsilon_{0}}{\gamma}-G(A,B,C), ~\frac{G(A,B,C)\alpha(\gamma)}{1-\alpha(\gamma)}\right\}.
\end{equation*}
Let $N\in \Bbb{N}$ be chosen in such a way that
\begin{align*}
G(Kx_{3n_{k}},Kx_{3n_{k}+1},Kx_{3n_{k}+2})&\leq G(A,B,C)+\varepsilon, ~~\forall n_{k}\geq N, \\
G(Kx_{3m_{k}},Kx_{3n_{k}+1},Kx_{3n_{k}+2})\leq &G(A,B,C)+\varepsilon, ~~\forall m_{k}> n_{k}\geq N, \\
G(Kx_{3l_{k}},Kx_{3n_{k}+1},Kx_{3n_{k}+2})\leq G&(A,B,C)+\varepsilon, ~~\forall l_{k}> m_{k}> n_{k}\geq N.
\end{align*}
Uniform convexity of $(X,G)$ implies that
\begin{flalign*}
G(A,B,C)&\leq G(Kx_{3n_{k}+1},Kx_{3n_{k}+2},\Bbb{W}(Kx_{3n_{k}},Kx_{3m_{k}},Kx_{3l_{k}},\frac{1}{3},\frac{1}{3},
\frac{1}{3})) \\
&\leq (G(A,B,C)+\varepsilon)(1-\alpha(\gamma))< G(A,B,C),
\end{flalign*}
which is a contradiction. Therefore, $(Kx_{3n})$ is a $G$-Cauchy sequence in $A$. By the fact that $K$ is tripartite relatively
anti-Lipschitzian on $A\cup B\cup C$, we have
\begin{equation*}
G(x_{3l},x_{3m},x_{3n})\leq c\, G(Kx_{3l},Kx_{3m},Kx_{3n})\rightarrow 0,~~l,m,n\rightarrow \infty,
\end{equation*}
that is, $(x_{3n})$ is $G$-Cauchy. Since $A$ is $G$-complete, there exists $p\in A$ such that $x_{3n}\rightarrow p$. Now, the
result follows from a similar argument used in the proof of Theorem \ref{6}.
\end{proof}
\section{Tripartite semi-contractions}
In this section we introduce tripartite semi-contractions and establish results on the existence and
convergence of tripartite coincidence-best proximity points for this mappings.
\begin{definition}
Let $A,B$ and $C$ be nonempty subsets of a $G$-metric space $(X,G)$ and $T,S,K:A\cup B\cup C\rightarrow A\cup B\cup C$ be
three mappings. The triple $(T;S;K)$ is called a \textsl{tripartite semi-contraction} if:

(1) $(T;S;K)$ is an \textbf{RLN} on $A\cup B\cup C$.

(2) For some $r\in (0,1)$ and for all $(x,y,z)\in A\times B\times C$ we have
\begin{equation*}
G(Tx,Sy,Kz)\leq rG(Kx,Ky,Kz)+(1-r)G(A,B,C)
\end{equation*}
and
\begin{equation*}
G(Sx,Sy,Sz)\leq rG(Tx,Sy,Kz).
\end{equation*}
\end{definition}
\begin{example}
Let $X:=\Bbb{R}$ and let
\begin{equation*}
G(x,y,z)=|x-y|+|y-z|+|z-x|
\end{equation*}
be a $G$-metric on $X$. In addition, let
\begin{equation*}
A=[0,1], ~~B=[0,2], ~~C=[0,3].
\end{equation*}
Then for mappings $T,S,K:A\cup B\cup C\rightarrow A\cup B\cup C$ defined by
\begin{equation*}
Tx:=0, ~~Sx:=0, ~~Kx:=
\begin{cases}
x ~~if~ x\in A\cup B \\
0 ~~if~ x\in C
\end{cases}
\end{equation*}
for each $(x,y,z)\in A\times B\times C$, we have
\begin{flalign*}
G(Sx,Sy,Sz)=0\leq rG(Tx,Sy,Kz),
\end{flalign*}
and
\begin{flalign*}
G(Tx,Sy,Kz)=0 &\leq rG(Kx,Ky,Kz) \\
&\leq rG(Kx,Ky,Kz)+(1-r)G(A,B,C).
\end{flalign*}
This implies that $(T;S;K)$ is a tripartite semi-contraction for each $r\in (0,1)$.
\end{example}
\begin{remark}
Notice that the condition (2) of the above definition implies that
$$G(Sx,Sy,Sz)\leq G(Kx,Ky,Kz),\quad \forall (x,y,z)\in A\times B\times C.$$
Moreover, if $K$ is a noncyclic, relatively nonexpansive mapping; meaning that
$$G(Kx,Ky,Kz)\leq G(x,y,z)\quad \forall (x,y,z)\in A\times B\times C,$$
then $S$ is a left cyclic contraction. In addition, if in the above definition $K$ is $G$-continuous, then $S$ is $G$-continuous as well (that is,
they are continuous with respect to the topology induced by the $G$-metric).
\end{remark}
\begin{lemma}\label{104}
Let $(A,B,C)$ be a triple of nonempty subsets of a $G$-metric space $(X,G)$ and let $(T;S;K)$ be a tripartite semi-contraction on
$A\cup B\cup C$. Suppose that $T(A)\subseteq S(C)\subseteq K(B)$, $T(B)\subseteq S(A)\subseteq K(C)$ and
$T(C)\subseteq S(B)\subseteq K(A)$. Then there exists a sequence $(x_{n})$ in $X$ such that $Tx_{n}=Sx_{n+1}=Kx_{n+2}$
for each $n\geq 0$; moreover, $(x_{3n}), ~(x_{3n+1})$ and $(x_{3n+2})$ are sequences in $A,C$ and $B$ respectively, and
\begin{equation*}
G(Tx_{3n},Tx_{3n+1},Tx_{3n+2})\rightarrow 0,\quad n\to\infty.
\end{equation*}
\end{lemma}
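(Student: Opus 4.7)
The plan has two main parts, mirroring the structure of Lemma~\ref{4}. The first is the construction of the sequence $(x_n)$ using the inclusion chains $T(A)\subseteq S(C)\subseteq K(B)$, $T(B)\subseteq S(A)\subseteq K(C)$, and $T(C)\subseteq S(B)\subseteq K(A)$. Starting from $x_0\in A$, because $Tx_0\in S(C)\subseteq K(B)$ I pick $x_1\in C$ with $Sx_1=Tx_0$ and then $x_2\in B$ with $Kx_2=Sx_1=Tx_0$. Continuing this preimage-selection procedure inductively produces $Tx_n=Sx_{n+1}=Kx_{n+2}$ for every $n\geq 0$, with $(x_{3n})\subseteq A$, $(x_{3n+1})\subseteq C$, and $(x_{3n+2})\subseteq B$. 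This half is verbatim as in Lemma~\ref{4} and uses only the inclusion hypotheses; the semi-contraction property is not needed here.

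For the convergence claim, set $b_n:=G(Tx_n,Tx_{n+1},Tx_{n+2})$. Unlike the contraction case, condition~(2) of the tripartite semi-contraction does not directly bound $G(Tx,Ty,Tz)$, so I combine both inequalities. Rewriting $b_{3n}=G(Sx_{3n+1},Sx_{3n+2},Sx_{3n+3})$ via $Tx_k=Sx_{k+1}$, and applying the second semi-contraction inequality at $(x_{3n+3},x_{3n+2},x_{3n+1})\in A\times B\times C$ (using the symmetry of $G$ to reorder the arguments), I get $b_{3n}\leq r\,G(Tx_{3n+3},Sx_{3n+2},Kx_{3n+1})$. Applying the first semi-contraction inequality to the same triple and using $Kx_{k+2}=Tx_k$ then gives
$$G(Tx_{3n+3},Sx_{3n+2},Kx_{3n+1})\leq r\,G(Kx_{3n+3},Kx_{3n+2},Kx_{3n+1})+(1-r)G(A,B,C)=r\,b_{3n-1}+(1-r)G(A,B,C).$$
Analogous bookkeeping for the residue classes $k\equiv 1,2\pmod 3$ yields the uniform recursion $b_k\leq r^2 b_{k-1}+r(1-r)G(A,B,C)$ for every $k\geq 1$.

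Iterating this recursion produces $b_k\leq r^{2k}b_0+\tfrac{r}{1+r}(1-r^{2k})G(A,B,C)$, so $\limsup_{k\to\infty} b_k\leq \tfrac{r}{1+r}G(A,B,C)$. On the other hand, since $Tx_k,Tx_{k+1},Tx_{k+2}$ always consists of one point in each of $A,B,C$, we have the trivial lower bound $b_k\geq G(A,B,C)$. Combining these, $G(A,B,C)\leq \tfrac{r}{1+r}G(A,B,C)$, which with $r\in(0,1)$ forces $G(A,B,C)=0$; the recursion then collapses to $b_k\leq r^{2k}b_0$ and yields $b_{3n}\to 0$.

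The main obstacle will be the asymmetry built into condition~(2): the two inequalities are only valid when the roles of $T$, $S$, and $K$ are matched to the $A$, $B$, $C$ membership of the arguments. The argument therefore requires systematic use of the symmetry of $G$ to relocate various cyclic shifts of index triples into the admissible form, and one must separately verify the recursion for $k\equiv 0,1,2\pmod 3$. The concluding observation that $G(A,B,C)$ must vanish is implicit (exactly as in Lemma~\ref{4}): without it, the conclusion $b_n\to 0$ would contradict the trivial lower bound $b_n\geq G(A,B,C)$.
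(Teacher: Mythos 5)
Your proposal is correct: the sequence construction is exactly the paper's, and your handling of condition (2) (reordering each consecutive triple so that the $A$-, $B$-, $C$-elements sit in the slots where $T$, $S$, $K$ respectively act) is sound — indeed more careful than the paper's displayed chain, which writes the triples in $C\times B\times A$ order. Where you genuinely diverge is in the decay estimate. The paper first converts the pair of inequalities into a pure one-step contraction: from $G(Tx,Sy,Kz)\leq rG(Kx,Ky,Kz)+(1-r)G(A,B,C)$ and the fact that $K$ is noncyclic (so $G(A,B,C)\leq G(Kx,Ky,Kz)$ for $(x,y,z)\in A\times B\times C$) it gets $G(Tx,Sy,Kz)\leq G(Kx,Ky,Kz)$, and hence, writing $b_m:=G(Tx_m,Tx_{m+1},Tx_{m+2})$, the recursion $b_{3n}\leq r\,b_{3n-1}\leq\cdots\leq r^{3n}G(Tx_0,Tx_1,Tx_2)\to 0$, with no additive term and no need to know anything about $G(A,B,C)$. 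You instead keep the additive term, obtain $b_k\leq r^2b_{k-1}+r(1-r)G(A,B,C)$, and then must close the loop by proving $G(A,B,C)=0$ via the lower bound $b_k\geq G(A,B,C)$ and the limsup estimate $\limsup b_k\leq \tfrac{r}{1+r}G(A,B,C)$; only then does the recursion give $b_k\leq r^{2k}b_0\to 0$. Both arguments are valid. The paper's absorption trick is shorter and gives the cleaner bound $r^{3n}G(Tx_0,Tx_1,Tx_2)$; your route is slightly longer but has the merit of making explicit a degeneracy that the paper leaves implicit, namely that the semi-contraction hypotheses (like the tripartite contraction hypotheses of Lemma \ref{4}) force $G(A,B,C)=0$ whenever such a sequence exists, so the ``best proximity'' content of these lemmas is really a coincidence-point statement.
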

\begin{proof}
Let $x_{0}\in A$. Since $Tx_{0}\in S(C)\subseteq K(B)$ and $T(C)\subseteq S(B)$, there exists $x_{1}\in C, ~x_{2}\in B$ such that
$Tx_{0}=Sx_{1}=Kx_{2}$ and $Tx_{1}=Sx_{2}\in K(A)$. We know that $T(B)\subseteq S(A)$, therefore there exists $x_{3}\in A$
such that $Tx_{1}=Sx_{2}=Kx_{3}$ and $Tx_{2}=Sx_{3}\in K(C)$.

Again, since $T(A)\subseteq S(C)$, there exists $x_{4}\in C$ such that $Tx_{2}=Sx_{3}=Kx_{4}$ and $Tx_{3}=Sx_{4}\in K(B)$.
Since $T(C)\subseteq S(B)$, there exists $x_{5}\in B$ such that $Tx_{3}=Sx_{4}=Kx_{5}$ and $Tx_{4}=Sx_{5}\in K(A)$.

Continuing this process, we obtain a sequence $(x_{n})$ such that
$(x_{3n}), ~(x_{3n+1})$ and $(x_{3n+2})$ are sequences in $A,C$ and $B$ respectively and $Tx_{n}=Sx_{n+1}=Kx_{n+2}$
for each $n\geq 0$. Since $(T;S;K)$ is a tripartite semi-contraction we have
\begin{flalign*}
G(Tx_{3n},Tx_{3n+1},Tx_{3n+2})&=G(Sx_{3n+1},Sx_{3n+2},Sx_{3n+3}) \\
&\leq rG(Tx_{3n+1},Sx_{3n+2},Kx_{3n+3})\leq rG(Kx_{3n+1},Kx_{3n+2},Kx_{3n+3}) \\
&=rG(Sx_{3n},Sx_{3n+1},Sx_{3n+2}) \\
&\leq ... \leq r^{3n}G(Sx_{1},Sx_{2},Sx_{3})=r^{3n}G(Tx_{0},Tx_{1},Tx_{2}).
\end{flalign*}

Letting $n\rightarrow \infty$, we have
\begin{equation*}
G(Tx_{3n},Tx_{3n+1},Tx_{3n+2})\rightarrow 0.
\end{equation*}
\end{proof}

Note that in the above lemma, for each $n\geq 1$, $(x_{3n-1})\subseteq B$ and for each $n\geq 2$, $(x_{3n-2})\subseteq C$.
\begin{lemma}\label{105}
Let $(A,B,C)$ be a nonempty triple of subsets of a $G$-metric space $(X,G)$ and let $(T;S;K)$ be a tripartite semi-contraction on
$A\cup B\cup C$. Suppose that $T(A)\subseteq S(C)\subseteq K(B)$, $T(B)\subseteq S(A)\subseteq K(C)$ and
$T(C)\subseteq S(B)\subseteq K(A)$. For $x_{0}\in A$, define $Tx_{n}=Sx_{n+1}=Kx_{n+2}$
for each $n\geq 0$. Then we have
\begin{equation*}
G(Kx_{3n},Kx_{3n+1},Kx_{3n+2})\rightarrow G(A,B,C).
\end{equation*}
\end{lemma}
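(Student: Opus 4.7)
The plan is to mirror the proof of Lemma~\ref{5} after first noticing that the two defining inequalities of a tripartite semi-contraction can be composed into a single estimate of exactly the form used there. Concretely, for every $(x,y,z)\in A\times B\times C$,
$$G(Sx,Sy,Sz)\le r\,G(Tx,Sy,Kz)\le r^{2}G(Kx,Ky,Kz)+r(1-r)G(A,B,C).$$
Because $r(1-r)\le 1-r^{2}$ and $G(A,B,C)\ge 0$, the constant $r(1-r)G(A,B,C)$ may be enlarged to $(1-r^{2})G(A,B,C)$ without reversing the inequality. Setting $s:=r^{2}\in(0,1)$, one therefore obtains
$$G(Sx,Sy,Sz)\le s\,G(Kx,Ky,Kz)+(1-s)G(A,B,C),$$
which is exactly the tripartite contraction estimate driving Lemma~\ref{5}.

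Once this reduction is in place, the argument of Lemma~\ref{5} transfers almost verbatim. Using the identity $Kx_{m}=Sx_{m-1}$ together with the symmetry of $G$ in all three arguments (to permute the indices into $A\times B\times C$), I would write
$$G(Kx_{3n},Kx_{3n+1},Kx_{3n+2})=G(Sx_{3n-1},Sx_{3n},Sx_{3n+1})\le s\,G(Kx_{3n-1},Kx_{3n},Kx_{3n+1})+(1-s)G(A,B,C).$$
Writing $a_{m}:=G(Kx_{m},Kx_{m+1},Kx_{m+2})$, the same reduction applied at each index yields the one-step recursion $a_{m}\le s\,a_{m-1}+(1-s)G(A,B,C)$ for every $m\ge 1$. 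Iterating $3n$ times gives
$$a_{3n}\le s^{3n}\,a_{0}+(1-s^{3n})G(A,B,C),$$
so the right-hand side tends to $G(A,B,C)$ as $n\to\infty$. On the other hand, because $K$ is tripartite noncyclic one has $Kx_{3n}\in A$, $Kx_{3n+1}\in C$ and $Kx_{3n+2}\in B$, whence $a_{3n}\ge G(A,B,C)$. Sandwiching these two estimates delivers $G(Kx_{3n},Kx_{3n+1},Kx_{3n+2})\rightarrow G(A,B,C)$.

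The main obstacle is purely notational bookkeeping: the triples $(x_{m-1},x_{m},x_{m+1})$ that arise during the iteration do not sit in $A\times B\times C$ in their natural order (e.g.\ they lie in $B\times A\times C$ when $m=3n$), so every application of the composed inequality must be preceded by a permutation of the arguments of $G$, justified by property~(4) of Definition~\ref{1}. The same permutation issue has to be handled for each of the residue classes of $m$ modulo $3$ so that the recursion $a_{m}\le s\,a_{m-1}+(1-s)G(A,B,C)$ is available throughout the iteration. Once this is verified, the remainder of the proof is a routine transcription of Lemma~\ref{5}.
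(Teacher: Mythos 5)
Your proposal is correct and takes essentially the same route as the paper: the paper's proof of Lemma \ref{105} likewise iterates the two semi-contraction inequalities along the orbit (bounding $G(Sx_{m-1},Sx_m,Sx_{m+1})$ by $G(Tx_{m-1},Sx_m,Kx_{m+1})$ and then applying the first condition) to reach $G(Kx_{3n},Kx_{3n+1},Kx_{3n+2})\leq r^{3n}G(Kx_0,Kx_1,Kx_2)+(1-r^{3n})G(A,B,C)$, which is exactly your estimate with $r$ in place of $s=r^2$. Pre-composing the two conditions into the single symmetric inequality $G(Sx,Sy,Sz)\leq r^2G(Kx,Ky,Kz)+(1-r^2)G(A,B,C)$ and recording the lower bound $G(Kx_{3n},Kx_{3n+1},Kx_{3n+2})\geq G(A,B,C)$ are only tidier packagings of the same computation (and your explicit handling of the ordering of the triple in $A\times B\times C$ via the symmetry of $G$ is a welcome clarification of a point the paper leaves implicit).
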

\begin{proof}
Since $(T;S;K)$ is tripartite semi-contraction we have
\begin{flalign*}
G(Kx_{3n},Kx_{3n+1},Kx_{3n+2})&=G(Sx_{3n-1},Sx_{3n},Sx_{3n+1}) \\
&\leq G(Tx_{3n-1},Sx_{3n},Kx_{3n+1}) \\
&\leq rG(Kx_{3n-1},Kx_{3n},Kx_{3n+1})+(1-r)G(A,B,C) \\
&=rG(Sx_{3n-2},Sx_{3n-1},Sx_{3n})+(1-r)G(A,B,C) \\
&\leq rG(Tx_{3n-2},Sx_{3n-1},Kx_{3n})+(1-r)G(A,B,C) \\
&\leq r[rG(Kx_{3n-2},Kx_{3n-1},Kx_{3n})+(1-r)G(A,B,C)]+(1-r)G(A,B,C) \\
&=r^{2}G(Kx_{3n-2},Kx_{3n-1},Kx_{3n})+(1-r^{2})G(A,B,C) \\
&\leq ... \leq r^{3n}G(Kx_{0},Kx_{1},Kx_{2})+(1-r^{3n})G(A,B,C).
\end{flalign*}

Letting $n\rightarrow \infty$, we obtain
\begin{equation*}
G(Kx_{3n},Kx_{3n+1},Kx_{3n+2})\rightarrow G(A,B,C).
\end{equation*}
\end{proof}

In the following, we shall establish a theorem on the existence of tripartite coincidence-best proximity point.
\begin{theorem}\label{106}
Let $(A,B,C)$ be a nonempty triple of subsets of a $G$-metric space $(X,G)$ and let $(T;S;K)$ be a tripartite semi-contraction on
$A\cup B\cup C$. Suppose that $T(A)\subseteq S(C)\subseteq K(B)$, $T(B)\subseteq S(A)\subseteq K(C)$ and
$T(C)\subseteq S(B)\subseteq K(A)$ and $S$ is $G$-continuous on $A\cup B\cup C$. For $x_{0}\in A$, define $Tx_{n}=Sx_{n+1}=Kx_{n+2}$ for each $n\geq 0$.
If $(x_{3n})$ has a $G$-convergent subsequence in $A$, then the triple $(T;S;K)$ has a tripartite coincidence-best proximity
point in $A$.
\end{theorem}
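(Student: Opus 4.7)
The plan is to mimic the argument of Theorem~\ref{6} line by line, replacing Lemma~\ref{4} with Lemma~\ref{104}, Lemma~\ref{5} with Lemma~\ref{105}, and substituting the $G$-continuity of $S$ for the $G$-continuity of $K$. Let $(x_{3n_{k}})$ be a subsequence of $(x_{3n})$ with $x_{3n_{k}}\to p\in A$. From Lemma~\ref{104} one has $G(Tx_{3n_{k}},Tx_{3n_{k}+1},Tx_{3n_{k}+2})\to 0$, and from Lemma~\ref{105} one has $G(Kx_{3n_{k}},Kx_{3n_{k}+1},Kx_{3n_{k}+2})\to G(A,B,C)$. The iteration identity $Tx_{n}=Sx_{n+1}=Kx_{n+2}$ reads off $Sx_{3n_{k}}=Kx_{3n_{k}+1}=Tx_{3n_{k}-1}$, and the $G$-continuity of $S$ at $p$ forces $Sx_{3n_{k}}\to Sp$, hence $Kx_{3n_{k}+1}\to Sp$, $Tx_{3n_{k}-1}\to Sp$ and $G(Sp,Sx_{3n_{k}},Sx_{3n_{k}})\to 0$.

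With these convergences in hand, I would apply property~(6) of Theorem~\ref{2} at the common anchor $a=Sx_{3n_{k}}$, rewrite the first two summands using the iteration identities, and invoke property~(3) of Definition~\ref{1} to arrive at
\begin{align*}
G(A,B,C)\le G(Kp,Tp,Sp)
&\le G(Kp,Sx_{3n_{k}},Sx_{3n_{k}})+G(Tp,Sx_{3n_{k}},Sx_{3n_{k}})+G(Sp,Sx_{3n_{k}},Sx_{3n_{k}})\\
&=G(Kp,Kx_{3n_{k}+1},Kx_{3n_{k}+1})+G(Tp,Tx_{3n_{k}-1},Tx_{3n_{k}-1})+G(Sp,Sx_{3n_{k}},Sx_{3n_{k}})\\
&\le G(Kp,Kx_{3n_{k}+1},Kx_{3n_{k}+2})+G(Tp,Tx_{3n_{k}-1},Tx_{3n_{k}-2})+G(Sp,Sx_{3n_{k}},Sx_{3n_{k}}),
\end{align*}
which is structurally identical to the chain used in the proof of Theorem~\ref{6}. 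The trivial lower bound $G(A,B,C)\le G(Kp,Tp,Sp)$ is automatic because the \textbf{RLN} structure of $(T;S;K)$ puts $Kp\in A$, $Tp\in B$ and $Sp\in C$.

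The step I expect to be the main obstacle, and the point where the argument genuinely departs from Theorem~\ref{6}, is showing that the right-hand side of the last display tends to $G(A,B,C)$ without the $G$-continuity of $K$ or $T$. The third summand vanishes as already noted. For the first summand the natural device is to use property~(5) of Definition~\ref{1} with anchor $Sx_{3n_{k}}=Kx_{3n_{k}+1}$, splitting off a term that is controlled directly by the $G$-continuity of $S$ and a residual that converges to $G(A,B,C)$ via Lemma~\ref{105} after re-expressing $Kx_{3n_{k}+2}=Sx_{3n_{k}+1}$; the second summand is treated symmetrically via Lemma~\ref{104} after noting $Tx_{3n_{k}-2}=Sx_{3n_{k}-1}=Kx_{3n_{k}}$ and $Tx_{3n_{k}-1}=Sx_{3n_{k}}\to Sp$. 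Combining these limits with the reverse inequality yields $G(Kp,Tp,Sp)=G(A,B,C)$, so $p$ is a tripartite coincidence-best proximity point of $(T;S;K)$ in $A$.
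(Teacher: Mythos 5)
Your setup, the choice of the convergent subsequence, the lower bound $G(A,B,C)\le G(Kp,Tp,Sp)$, and the displayed chain of inequalities coincide exactly with the paper's proof, and your treatment of the third summand is fine: continuity of $S$ gives $Sx_{3n_{k}}\to Sp$, hence $G(Sp,Sx_{3n_{k}},Sx_{3n_{k}})\to 0$. The genuine gap sits exactly at the step you yourself flag as the main obstacle, and the device you propose does not close it. Splitting $G(Kp,Kx_{3n_{k}+1},Kx_{3n_{k}+2})$ by the rectangle inequality at the anchor $Sx_{3n_{k}}=Kx_{3n_{k}+1}$ gives $G(Kp,Sx_{3n_{k}},Sx_{3n_{k}})$ plus a residual; the residual is indeed dominated by $G(Kx_{3n_{k}},Kx_{3n_{k}+1},Kx_{3n_{k}+2})\to G(A,B,C)$ via Lemma \ref{105}, but the split-off term is not controlled by the $G$-continuity of $S$: all that continuity yields is $G(Kp,Sx_{3n_{k}},Sx_{3n_{k}})\to G(Kp,Sp,Sp)$, and $Kp\in A$, $Sp\in C$ are in general distinct points, so this limit need not vanish. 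The splitting is also circular, since in your chain the first summand before the last inequality is literally $G(Kp,Sx_{3n_{k}},Sx_{3n_{k}})$. The symmetric treatment of the second summand has the same defect, leaving $G(Tp,Sx_{3n_{k}},Sx_{3n_{k}})\to G(Tp,Sp,Sp)$ uncontrolled. Adding up, your argument only produces $G(Kp,Tp,Sp)\le G(Kp,Sp,Sp)+G(Tp,Sp,Sp)+G(A,B,C)$, which is weaker than what Theorem \ref{2}(6) gives for free and does not yield the desired equality.

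For comparison, the paper does not derive the two key limits from the displayed chain at all: it asserts, before the display and citing Lemmas \ref{104} and \ref{105} together with the continuity hypothesis, that $G(Tp,Tx_{3n_{k}-1},Tx_{3n_{k}-2})\to 0$ and $G(Kp,Kx_{3n_{k}+1},Kx_{3n_{k}+2})\to G(A,B,C)$, and only then lets $k\to\infty$ in the chain. In the analogous Theorem \ref{6} such limits are justified by writing, e.g., $G(Kp,Kx_{3n_{k}+1},Kx_{3n_{k}+2})\le G(Kp,Kx_{3n_{k}},Kx_{3n_{k}})+G(Kx_{3n_{k}},Kx_{3n_{k}+1},Kx_{3n_{k}+2})$ and using the $G$-continuity of $K$ (which, for a tripartite contraction, also forces $S$ and $T$ to be continuous) to kill the first term. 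The missing idea in your proposal is therefore a link between $Kp$, $Tp$ and the orbit sequences $(Kx_{n})$, $(Tx_{n})$: continuity of $S$, used as you use it, connects only $Sp$ to the orbit, and rectangle-splitting at $Sx_{3n_{k}}$ cannot substitute for that connection. Without supplying such a link (or an additional continuity-type hypothesis playing the role that $K$-continuity plays in Theorem \ref{6}), the two limits, and with them the conclusion $G(Kp,Tp,Sp)=G(A,B,C)$, remain unproven in your argument.
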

\begin{proof}
Let $(x_{3n_{k}})$ be a subsequence of $(x_{3n})$ such that $x_{3n_{k}}\rightarrow p\in A$ (in the topology induced by the $G$-metric).
By Lemmas \ref{104} and \ref{105}, if
$k\rightarrow \infty$, we obtain
\begin{equation*}
G(Tp,Tx_{3n_{k}-1},Tx_{3n_{k}-2})\rightarrow 0,
\end{equation*}
and
\begin{equation*}
G(Kp,Kx_{3n_{k}+1},Kx_{3n_{k}+2})\rightarrow G(A,B,C).
\end{equation*}
It follows from the continuity of $S$ and Theorem \ref{2} that
\begin{align*}
G(A,B,C)&\leq G(Kp,Tp,Sp)\leq G(Kp,Sx_{3n_{k}},Sx_{3n_{k}})+G(Tp,Sx_{3n_{k}},Sx_{3n_{k}})+G(Sp,Sx_{3n_{k}},Sx_{3n_{k}}) \\
&\leq G(Kp,Kx_{3n_{k}+1},Kx_{3n_{k}+1})+G(Tp,Tx_{3n_{k}-1},Tx_{3n_{k}-1})+G(Sp,Sx_{3n_{k}},Sx_{3n_{k}}) \\
&\leq G(Kp,Kx_{3n_{k}+1},Kx_{3n_{k}+2})+G(Tp,Tx_{3n_{k}-1},Tx_{3n_{k}-2})+G(Sp,Sx_{3n_{k}},Sx_{3n_{k}}).
\end{align*}
Letting $k\rightarrow \infty$, we conclude that
\begin{equation*}
G(Kp,Tp,Sp)=G(A,B,C).
\end{equation*}
\end{proof}
\begin{lemma}\label{107}
Let $(A,B,C)$ be a nonempty triple of subsets of a $G$-metric space $(X,G)$ and let $(T;S;K)$ be a tripartite semi-contraction on
$A\cup B\cup C$. Suppose that $T(A)\subseteq S(C)\subseteq K(B)$, $T(B)\subseteq S(A)\subseteq K(C)$ and
$T(C)\subseteq S(B)\subseteq K(A)$, moreover $S$ and $K$ commute on $A\cup C$. For $x_{0}\in A$, define $Tx_{n}=Sx_{n+1}=Kx_{n+2}$
for each $n\geq 0$. Then $(Kx_{3n})$, $(Kx_{3n+1})$ and $(Kx_{3n+2})$ are $G$-bounded sequences in $A,\, C$ and $B$, respectively.
\end{lemma}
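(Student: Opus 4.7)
The plan is to imitate the proof of Lemma \ref{7} almost verbatim, since the only properties of a tripartite contraction used there are (a) the pointwise inequality $G(Sx,Sy,Sz)\le G(Kx,Ky,Kz)$ on $A\times B\times C$, (b) the iterated bound $G(Kx_{3n},Kx_{3n+1},Kx_{3n+2})\le r^{3n}G(Kx_0,Kx_1,Kx_2)+(1-r^{3n})G(A,B,C)$, and (c) the sequential identities $Tx_n=Sx_{n+1}=Kx_{n+2}$ together with the commutativity $S\circ K=K\circ S$ on $A\cup C$. Property (a) still holds under the semi-contraction hypothesis by the Remark that follows Definition of tripartite semi-contractions (it is the chain $G(Sx,Sy,Sz)\le rG(Tx,Sy,Kz)\le rG(Kx,Ky,Kz)+(1-r)G(A,B,C)\le G(Kx,Ky,Kz)$); property (b) is exactly Lemma \ref{105}; property (c) is built into the setup.

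Concretely, I would first observe that by Lemma \ref{105} and the rectangle inequality of Theorem \ref{2}, the three sequences $(Kx_{3n})$, $(Kx_{3n+1})$, $(Kx_{3n+2})$ are $G$-bounded together or not at all, so it suffices to $G$-bound $(Kx_{3n})\subseteq A$. Assume, for contradiction, that $(Kx_{3n})$ is not $G$-bounded, choose
\[
M > \max\left\{\frac{r^{2}}{1-r^{2}}\,G(K(Kx_{0}),K(Kx_{1}),S(Kx_{1}))+G(A,B,C),\; G(Sx_{2},Sx_{0},S(Kx_{1}))\right\},
\]
and pick $N_{0}\in\mathbb{N}$ minimal in the obvious sense so that
\[
G(S(Kx_{1}),S(Kx_{2}),Kx_{3N_{0}+1})> M\quad\text{while}\quad G(S(Kx_{1}),S(Kx_{2}),Kx_{3N_{0}-1})\le M.
\]

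Next, using $Sx_{n}=Kx_{n+1}$, the semi-contraction comparison (a), the commutativity of $S$ and $K$ on $A\cup C$, and the rectangle inequality, I would run the same chain as in Lemma \ref{7}:
\[
G(S(Kx_{1}),S(Kx_{2}),Kx_{3N_{0}+1})=G(S(Kx_{1}),S(Kx_{2}),Sx_{3N_{0}})\le G(K(Kx_{1}),K(Kx_{2}),Kx_{3N_{0}}),
\]
which by the commutativity equals $G(S(Kx_{0}),S(Kx_{1}),Sx_{3N_{0}-1})$ and is therefore bounded above by $G(K(Kx_{0}),K(Kx_{1}),Kx_{3N_{0}-1})$; splitting the last term via Theorem \ref{2} yields the upper bound $G(K(Kx_{0}),K(Kx_{1}),S(Kx_{1}))+M$. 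On the other hand, the iteration of Lemma \ref{105} applied to the left-hand side turns the strict lower bound $M$ into $\frac{M-G(A,B,C)}{r^{2}}+G(A,B,C)$, and combining the two produces
\[
M<\frac{r^{2}}{1-r^{2}}\,G(K(Kx_{0}),K(Kx_{1}),S(Kx_{1}))+G(A,B,C),
\]
contradicting the choice of $M$.

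The only point that requires genuine checking—and so the main obstacle—is confirming that the semi-contraction axioms really do give the two auxiliary facts (a) and (b) with the same constant $r$ that appear in the contraction case; once that is in place, the entire algebraic chain of Lemma \ref{7} transports word-for-word to the semi-contraction setting, and no new estimate is needed.
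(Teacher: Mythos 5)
Your overall route is the paper's own: the paper proves this lemma simply by transferring the argument of Lemma \ref{7} to the semi-contraction setting, which is exactly what you propose. However, there is one genuine gap in how you account for the factor $r^{2}$. The contradiction in Lemma \ref{7} is not produced by the qualitative comparison (a) $G(Sx,Sy,Sz)\le G(Kx,Ky,Kz)$ together with Lemma \ref{105}: with (a) alone your chain only yields $M< G(S(Kx_{1}),S(Kx_{2}),Kx_{3N_{0}+1})\le G(K(Kx_{0}),K(Kx_{1}),S(Kx_{1}))+M$, which is no contradiction at all. The lower bound $\frac{M-G(A,B,C)}{r^{2}}+G(A,B,C)$ comes from applying the \emph{quantitative} inequality $G(Sx,Sy,Sz)\le rG(Kx,Ky,Kz)+(1-r)G(A,B,C)$ at the two places in the chain where you invoke only (a); it has nothing to do with Lemma \ref{105}, which concerns the special triples $(Kx_{3n},Kx_{3n+1},Kx_{3n+2})$ and gives no information about $G(S(Kx_{1}),S(Kx_{2}),Kx_{3N_{0}+1})$. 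So the phrase ``the iteration of Lemma \ref{105} applied to the left-hand side'' does not justify that bound, and as written the argument does not close; likewise your closing remark that only facts (a) and (b) need checking misidentifies what is actually needed.

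The gap is easily repaired, and this is the verification you should carry out explicitly. For a tripartite semi-contraction, chaining the two axioms gives, for $(x,y,z)\in A\times B\times C$,
\[
G(Sx,Sy,Sz)\le rG(Tx,Sy,Kz)\le r\bigl[rG(Kx,Ky,Kz)+(1-r)G(A,B,C)\bigr]\le rG(Kx,Ky,Kz)+(1-r)G(A,B,C),
\]
and since $G$ is symmetric in its three arguments this estimate applies to the triples occurring in the chain (for instance $(x_{3N_{0}},Kx_{2},Kx_{1})$ and $(Kx_{0},x_{3N_{0}-1},Kx_{1})$, after permuting the entries so that $T$ acts on the $A$-point, $S$ on the $B$-point and $K$ on the $C$-point; note $Kx_{1}\in C$ and $Kx_{2}\in B$ because $K$ is noncyclic). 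Using this inequality at both comparison steps, together with the commutativity of $S$ and $K$ on $A\cup C$, yields
\[
G(S(Kx_{1}),S(Kx_{2}),Sx_{3N_{0}})\le r^{2}\,G(K(Kx_{0}),K(Kx_{1}),Kx_{3N_{0}-1})+(1-r^{2})\,G(A,B,C),
\]
and only then does $M<G(S(Kx_{1}),S(Kx_{2}),Kx_{3N_{0}+1})$ convert into $\frac{M-G(A,B,C)}{r^{2}}+G(A,B,C)<G(K(Kx_{0}),K(Kx_{1}),S(Kx_{1}))+M$ and hence into the contradiction with your choice of $M$. With this substitution in place of your step (a), the transfer of Lemma \ref{7} goes through and coincides with the paper's (unwritten) argument.
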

\begin{proof}
The proof is similar to that of Lemma \ref{7}.
\end{proof}

The next theorem is a straightforward consequence of Theorem \ref{106} and Lemma \ref{107}.
\begin{theorem}\label{1011}
Let $(A,B,C)$ be a nonempty triple of subsets of a $G$-metric space $(X,G)$ such that $A$ is $G$-sequentially compact, and let
$(T;S;K)$ be a tripartite semi-contraction on $A\cup B\cup C$. Suppose that $T(A)\subseteq S(C)\subseteq K(B)$,
$T(B)\subseteq S(A)\subseteq K(C)$ and $T(C)\subseteq S(B)\subseteq K(A)$, moreover $S$ and $K$ commute on $A\cup C$. If $K$ is
tripartite relatively anti-Lipschitzian and $G$-continuous on $A\cup B\cup C$, then the $(T;S;K)$ has a tripartite
coincidence-best proximity point in $A$.
\end{theorem}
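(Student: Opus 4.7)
The plan is to treat this as a direct corollary: Lemma~\ref{107} together with the tripartite anti-Lipschitzian hypothesis will produce a $G$-convergent subsequence of $(x_{3n})$ in $A$, after which Theorem~\ref{106} finishes the job.

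First I would fix $x_0\in A$ and invoke Lemma~\ref{104} to construct the iterative sequence $(x_n)$ with $Tx_n=Sx_{n+1}=Kx_{n+2}$ for every $n\ge 0$, noting that the three chains of inclusions force $(x_{3n})\subseteq A$, $(x_{3n+1})\subseteq C$ and $(x_{3n+2})\subseteq B$. Because $S$ and $K$ commute on $A\cup C$, Lemma~\ref{107} applies and yields $u_0,v_0\in X$ and $M>0$ with $G(Kx_{3n},u_0,v_0)\le M$ for every $n$. I would then convert this into $G$-boundedness of $(x_{3n})$ itself: fixing any $b_0\in B$ and $c_0\in C$, the tripartite relatively anti-Lipschitzian property gives
\[
G(x_{3n},b_0,c_0)\le c\,G(Kx_{3n},Kb_0,Kc_0),
\]
and by Theorem~\ref{2}(6) the right-hand side is bounded by $c\bigl(G(Kx_{3n},u_0,u_0)+G(Kb_0,u_0,u_0)+G(Kc_0,u_0,u_0)\bigr)$, which is uniformly bounded in $n$ thanks to the previously established bound on $(Kx_{3n})$ together with property (3) of Definition~\ref{1}. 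Hence $(x_{3n})$ is $G$-bounded in $A$, and the $G$-sequential compactness of $A$ supplies a $G$-convergent subsequence $x_{3n_k}\to p\in A$.

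To close, I would invoke Theorem~\ref{106}. Its only hypothesis not appearing verbatim in ours is the $G$-continuity of $S$, which however follows from the $G$-continuity of $K$ via the remark preceding Lemma~\ref{104} (for semi-contractions, continuity of $K$ forces continuity of $S$). Every other hypothesis of Theorem~\ref{106} is inherited directly from the present theorem, so it yields that $p$ is a tripartite coincidence-best proximity point of $(T;S;K)$ in $A$. The only step that requires any care is the passage from $G$-boundedness of $(Kx_{3n})$ to $G$-boundedness of $(x_{3n})$ through the $A\times B\times C$ anti-Lipschitzian inequality; once that bookkeeping is in place, the theorem is essentially a direct citation of the two previous results.
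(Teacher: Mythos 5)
Your proposal is correct and follows essentially the same route as the paper, which simply declares the theorem a straightforward consequence of Theorem~\ref{106} and Lemma~\ref{107}: you use Lemma~\ref{107} for $G$-boundedness of $(Kx_{3n})$, transfer it to $(x_{3n})$ via the anti-Lipschitzian inequality, extract a convergent subsequence by $G$-sequential compactness, and conclude with Theorem~\ref{106}. Your explicit handling of the two points the paper leaves implicit (the boundedness transfer and the $G$-continuity of $S$ via the remark that for semi-contractions continuity of $K$ yields continuity of $S$) matches the paper's intended argument.
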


We now illustrate Theorem \ref{1011} with the following examples.
\begin{example}
Consider $X:=\Bbb{R}$ with the usual metric. We have already seen that the function $G:X\times X\times X\rightarrow [0,+\infty)$ defined by
\begin{equation*}
G(x,y,z)=\max\{|x-y|,|y-z|,|z-x|\},
\end{equation*}
for all $x,y,z\in X$, is a $G$-metric on $X$. For $A=C=[0,1], B=[-1,0]$ we have $G(A,B,C)=0$. Define
\begin{equation*}
Tx:=0, ~~Sx:=
\begin{cases}
\frac{1}{4}x ~~if~ x\in C \\
0 ~~~~if~ x\in A\cup B
\end{cases}
, ~~Kx:=\frac{1}{2}x, ~~\forall x\in A\cup B\cup C.
\end{equation*}
Then for each $x,z\in [0,1]$ and $y\in [-1,0]$, since $x,-y,z\in [0,1]$, we may assume that $x\leq z\leq -y$, so that we have
\begin{flalign*}
G(Sx,Sy,Sz)&=\max\{|Sx-Sy|,|Sy-Sz|,|Sz-Sx|\}=\max\{|0-0|,|0-\frac{1}{4}z|,|\frac{1}{4}z-0|\} \\
&=\frac{1}{4}z=\frac{1}{2}(\frac{1}{2}z)=\frac{1}{2}\max\{|0-0|,|0-\frac{1}{2}z|,|\frac{1}{2}z-0|\} \\
&=\frac{1}{2}G(Tx,Sy,Kz).
\end{flalign*}
In addition, we see that
\begin{flalign*}
G(Tx,Sy,Kz)&=\max\{|0-0|,|0-\frac{1}{2}z|,|\frac{1}{2}z-0|\}=\frac{1}{2}z \\
&=\frac{1}{4}(2z)=\frac{1}{2}(\frac{1}{2}(2z))=\frac{1}{2}(\frac{1}{2}(z+z))\leq \frac{1}{2}|\frac{1}{2}z-\frac{1}{2}y|=
\frac{1}{2}|\frac{1}{2}y-\frac{1}{2}z| \\
&\leq \frac{1}{2}\max\{|\frac{1}{2}x-\frac{1}{2}y|,
|\frac{1}{2}y-\frac{1}{2}z|,|\frac{1}{2}z-\frac{1}{2}x|\} \\
&=\frac{1}{2}G(Kx,Ky,Kz) \\
&\leq \frac{1}{2}G(Kx,Ky,Kz)+\frac{1}{2}G(A,B,C).
\end{flalign*}
This implies that $(T;S;K)$ is a tripartite semi-contraction with $r=\frac{1}{2}$. Also, $T(A)\subseteq S(C)\subseteq K(B)$,
$T(B)\subseteq S(A)\subseteq K(C)$ and $T(C)\subseteq S(B)\subseteq K(A)$. Moreover, $K$ is $G$-continuous on
$A\cup B\cup C$ and $A$ is $G$-sequentially compact in $X$.
Besides, $K$ is tripartite relatively anti-Lipschitzian on $A\cup B\cup C$ with $c=3$. In fact, for all $(x,y,z)\in A\times B\times C$
we have
\begin{flalign*}
G(x,y,z)&=\max\{|x-y|,|y-z|,|z-x|\} \\
&\leq 3\max\{|\frac{1}{2}x-\frac{1}{2}y|,|\frac{1}{2}y-\frac{1}{2}z|,|\frac{1}{2}z-\frac{1}{2}x|\} \\
&=3G(Kx,Ky,Kz).
\end{flalign*}
Finally, for each $x\in A\cup C$, it is clear that
\begin{equation*}
K(Sx)=S(Kx),
\end{equation*}
that is, $S$ and $K$ commute on $A \cup C$. Thereby, the existence of tripartite coincidence-best proximity point for
$(T;S;K)$ follows from Theorem \ref{1011}. That is, there exists $p\in A$ such that
\begin{equation*}
G(Kp,Tp,Sp)=G(A,B,C),
\end{equation*}
or
\begin{equation*}
G(\frac{1}{2}p,0,0)=0.
\end{equation*}
which implies that $p=0$.
\end{example}
\begin{example}
Consider $X:=\Bbb{R}$ with the usual metric. We have already seen that the function $G:X\times X\times X\rightarrow [0,+\infty)$ defined by
\begin{equation*}
G(x,y,z)=\max\{|x-y|,|y-z|,|z-x|\},
\end{equation*}
for all $x,y,z\in X$, is a $G$-metric on $X$. For $A=C=[0,1], B=[-1,0]$ we have $G(A,B,C)=0$. We now consider
\begin{equation*}
Tx:=0, ~~Sx:=
\begin{cases}
\frac{1}{2}\sin x ~~~ x\in C \\
0 ~~~~~~ x\in A\cup B
\end{cases}
, ~~Kx:=x, ~~\forall x\in A\cup B\cup C.
\end{equation*}
Then for each $x,z\in [0,1]$ and $y\in [-1,0]$, since $x,-y,z\in [0,1]$, we may assume that $x\leq z\leq -y$, therefore we have
\begin{flalign*}
G(Sx,Sy,Sz)&=\max\{|Sx-Sy|,|Sy-Sz|,|Sz-Sx|\}=\max\{|0-0|,|0-\frac{1}{2}\sin z|,|\frac{1}{2}\sin z-0|\} \\
&=\frac{1}{2}\sin z\leq \frac{1}{2}z=\frac{1}{2}\max\{|0-0|,|0-z|,|z-0|\} \\
&=\frac{1}{2}G(Tx,Sy,Kz).
\end{flalign*}
In addition, we see that
\begin{flalign*}
G(Tx,Sy,Kz)&=\max\{|0-0|,|0-z|,|z-0|\}=z \\
&=\frac{1}{2}(2z)=\frac{1}{2}(z+z)\leq \frac{1}{2}|z-y|=\frac{1}{2}|y-z| \\
&\leq \frac{1}{2}\max\{|x-y|,|y-z|,|z-x|\} \\
&=\frac{1}{2}G(Kx,Ky,Kz) \\
&\leq \frac{1}{2}G(Kx,Ky,Kz)+\frac{1}{2}G(A,B,C).
\end{flalign*}
This implies that $(T;S;K)$ is a tripartite semi-contraction with $r=\frac{1}{2}$. Also, $T(A)\subseteq S(C)\subseteq K(B)$,
$T(B)\subseteq S(A)\subseteq K(C)$ and $T(C)\subseteq S(B)\subseteq K(A)$. Moreover, $K$ is $G$-continuous on
$A\cup B\cup C$ and $A$ is $G$-sequentially compact in $X$.
Besides, $K$ is tripartite relatively anti-Lipschitzian on $A\cup B\cup C$ with $c=3$. In fact, for all $(x,y,z)\in A\times B\times C$
we have
\begin{flalign*}
G(x,y,z)&=\max\{|x-y|,|y-z|,|z-x|\} \\
&\leq 3\max\{|x-y|,|y-z|,|z-x|\} \\
&=3G(Kx,Ky,Kz).
\end{flalign*}
Finally, for each $x\in A\cup C$ it is clear that
\begin{equation*}
K(Sx)=S(Kx),
\end{equation*}
that is, $S$ and $K$ commute on $A \cup C$. Now, the existence of tripartite coincidence-best proximity point for
$(T;S;K)$ follows from Theorem \ref{1011}. This means that there exists $p\in A$ such that
\begin{equation*}
G(Kp,Tp,Sp)=G(A,B,C),
\end{equation*}
or
\begin{equation*}
G(p,0,0)=0.
\end{equation*}
from which it follows that $p=0$.
\end{example}
So far, we have been dealing with the existence of tripartite coincidence-best proximity points for tripartite semi-contractions.
Now we want to approximate these points. To achieve this goal, we  need the convex structure of $G$-metric space.
\begin{lemma}\label{108}
Let $(A,B,C)$ be a nonempty triple of subsets of a uniformly convex $G$-metric space $(X,G;\Bbb{W})$ such that $A$ is $G$-convex.
Let $(T;S;K)$ be a tripartite semi-contraction on $A\cup B\cup C$ such that $T(A)\subseteq S(C)\subseteq K(B)$,
$T(B)\subseteq S(A)\subseteq K(C)$ and $T(C)\subseteq S(B)\subseteq K(A)$. For $x_{0}\in A$, define $Tx_{n}=Sx_{n+1}=Kx_{n+2}$
for each $n\geq 0$. Then
\begin{align*}
G(Kx_{3n},Kx_{3n+3},Kx_{3n+6})\rightarrow 0,
\end{align*}
\begin{align*}
G(Kx_{3n+1},Kx_{3n+4},Kx_{3n+7})\rightarrow 0,
\end{align*}
and
\begin{equation*}
G(Kx_{3n+2},Kx_{3n+5},Kx_{3n+8})\rightarrow 0.
\end{equation*}
\end{lemma}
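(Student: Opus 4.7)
The plan is to transplant the proof of Lemma \ref{8} almost verbatim, with Lemma \ref{105} playing the role that Lemma \ref{5} played in the contraction setting. Since all three convergences follow the same pattern, I will describe only the first one, $G(Kx_{3n},Kx_{3n+3},Kx_{3n+6})\to 0$, and note where a delicate point arises for the remaining two.

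Argue by contradiction: suppose there exist $\varepsilon_0>0$ and a subsequence $(n_k)$ with $G(Kx_{3n_k},Kx_{3n_k+3},Kx_{3n_k+6})\ge\varepsilon_0$. Choose $0<\gamma<1$ with $\varepsilon_0/\gamma>G(A,B,C)$ and then $\varepsilon>0$ small enough that
$$\varepsilon<\min\Bigl\{\tfrac{\varepsilon_0}{\gamma}-G(A,B,C),\ \tfrac{G(A,B,C)\,\alpha(\gamma)}{1-\alpha(\gamma)}\Bigr\}$$
(if $G(A,B,C)=0$ the bound degenerates, but the ensuing contradiction is even simpler). By Lemma \ref{105}, together with the observation that its conclusion is unchanged under a shift of the starting index (since the iteration of Lemma \ref{104} can be restarted from any $x_0\in A$), there is $N$ such that for all $n_k\ge N$,
$$G(Kx_{3n_k+2},Kx_{3n_k+1},Kx_{3n_k+j})\le G(A,B,C)+\varepsilon,\qquad j\in\{0,3,6\},$$
while the contradictory hypothesis gives $G(Kx_{3n_k},Kx_{3n_k+3},Kx_{3n_k+6})\ge\varepsilon_0>\gamma\bigl(G(A,B,C)+\varepsilon\bigr)$.

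Set $r:=G(A,B,C)+\varepsilon$. Because $K$ is noncyclic and the three indices are divisible by $3$, the points $Kx_{3n_k},Kx_{3n_k+3},Kx_{3n_k+6}$ all lie in $A$; the $G$-convexity of $A$ then places
$$w:=\Bbb{W}\bigl(Kx_{3n_k},Kx_{3n_k+3},Kx_{3n_k+6};\tfrac13,\tfrac13,\tfrac13\bigr)$$
in $A$. The uniform convexity of $(X,G,\Bbb{W})$, applied with anchors $u=Kx_{3n_k+2}\in B$, $v=Kx_{3n_k+1}\in C$, and radius $r$, yields $G(u,v,w)\le r(1-\alpha(\gamma))$. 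Since $u\in B$, $v\in C$, $w\in A$, we also have $G(u,v,w)\ge G(A,B,C)$, and the choice of $\varepsilon$ then forces $G(A,B,C)<G(A,B,C)$, the desired contradiction.

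For the remaining two statements, one repeats the argument after shifting the base index by $1$ and $2$, replacing the triple $(Kx_{3n},Kx_{3n+3},Kx_{3n+6})$ by $(Kx_{3n+1},Kx_{3n+4},Kx_{3n+7})$ and $(Kx_{3n+2},Kx_{3n+5},Kx_{3n+8})$, respectively. The main obstacle I foresee is a technicality in these two cases: the uniform-convexity step needs the $\Bbb{W}$-midpoint to sit in $C$ (respectively $B$) so that the lower bound $G(A,B,C)$ on $G(u,v,w)$ remains valid, so one must either invoke the analogous $G$-convexity of $B$ and $C$ or reselect the pair of anchors so that the lower bound is preserved. Once this bookkeeping is in place, the contradiction scheme above goes through unchanged, with Lemma \ref{105} supplying every convergence needed for the bound $G(\cdot,\cdot,\cdot)\le G(A,B,C)+\varepsilon$ on the anchored triples.
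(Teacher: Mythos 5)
Your proposal is essentially the paper's own argument: the paper disposes of Lemma \ref{108} with the single remark that the proof is ``essentially the same as that of Lemma \ref{8}'', and your write-up is exactly that adaptation, with Lemmas \ref{104} and \ref{105} replacing Lemmas \ref{4} and \ref{5} and the same contradiction via uniform convexity with anchors $Kx_{3n_k+2}\in B$, $Kx_{3n_k+1}\in C$ and the $\Bbb{W}$-mean of the three points of $A$. The delicate points you flag --- that the bounds $G(Kx_{3n_k+2},Kx_{3n_k+1},Kx_{3n_k+j})\leq G(A,B,C)+\varepsilon$ for $j=3,6$ are not literally instances of Lemma \ref{105}, and that the second and third limits need the $\Bbb{W}$-mean to land in $C$ (respectively $B$), hence some convexity or reanchoring not provided by the hypothesis that only $A$ is $G$-convex --- are present, and equally unaddressed, in the paper's proof of Lemma \ref{8} itself, which passes over them with ``similarly''; so your attempt matches the paper's route and its level of rigor.
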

\begin{proof}
The proof is essentially the same as that of Lemma \ref{8}. We omit the details.
\end{proof}
\begin{theorem}\label{109}
Let $(A,B,C)$ be a triple of nonempty, closed subsets of a complete uniformly convex $G$-metric space $(X,G,\Bbb{W})$
such that $A$ is $G$-convex. Let $(T;S;K)$ be a tripartite semi-contraction on $A\cup B\cup C$ such that
 $T(A)\subseteq S(C)\subseteq K(B)$, $T(B)\subseteq S(A)\subseteq K(C)$ and $T(C)\subseteq S(B)\subseteq K(A)$ and that
 $K$ is $G$-continuous and tripartite relatively anti-Lipschitzian on $A\cup B\cup C$. Then
 $(T;S;K)$ has a tripartite coincidence-best proximity point in $A$. Moreover, if $x_{0}\in A$ and $Tx_{n}=Sx_{n+1}=Kx_{n+2}$,
 then $(x_{3n})$ $G$-converges to the tripartite coincidence-best proximity point of $(T;S;K)$.
\end{theorem}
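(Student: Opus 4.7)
The plan is to mirror the argument used in the proof of Theorem~\ref{9}, with the tripartite semi-contraction inequalities in place of the contraction ones. Starting from $x_0\in A$, Lemma~\ref{104} produces the sequence $(x_n)$ with $Tx_n=Sx_{n+1}=Kx_{n+2}$ and $(x_{3n})\subseteq A$, $(x_{3n+1})\subseteq C$, $(x_{3n+2})\subseteq B$, while Lemma~\ref{105} yields $G(Kx_{3n},Kx_{3n+1},Kx_{3n+2})\to G(A,B,C)$ and Lemma~\ref{108} gives $G(Kx_{3n},Kx_{3n+3},Kx_{3n+6})\to 0$ together with its two cyclic analogues. These four facts are the only ingredients the whole proof will rely on; the main task is to upgrade them to the assertion that $(Kx_{3n})$ is a $G$-Cauchy sequence in $A$.

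The first step toward Cauchyness is the intermediate bound that for every $\varepsilon>0$ there is $N_0$ with $G(Kx_{3m},Kx_{3n+1},Kx_{3n+2})<G(A,B,C)+\varepsilon$ whenever $m>n\geq N_0$. I would argue by contradiction: pick $m_k>n_k$ with $G(Kx_{3m_k},Kx_{3n_k+1},Kx_{3n_k+2})\geq G(A,B,C)+\varepsilon_0$ and $G(Kx_{3m_k-3},Kx_{3n_k+1},Kx_{3n_k+2})<G(A,B,C)+\varepsilon_0$. Rewriting $Kx_{3m_k}=Tx_{3m_k-2}$ and $Kx_{3n_k+1}=Sx_{3n_k}$ brings the mixed quantity $G(Tx,Sy,Kz)$ into play, to which the semi-contraction inequality $G(Tx,Sy,Kz)\leq rG(Kx,Ky,Kz)+(1-r)G(A,B,C)$ applies directly. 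Combining this with $G(Sx,Sy,Sz)\leq G(Tx,Sy,Kz)$, three iterations of the semi-contraction inequality, and the rectangle inequalities of Theorem~\ref{2}, one passes to the limit $k\to\infty$ using Lemmas~\ref{104},~\ref{105},~\ref{108} and the minimality bound on $m_k$ to obtain $G(A,B,C)+\varepsilon_0\leq r(G(A,B,C)+\varepsilon_0)+(1-r)G(A,B,C)$, forcing $r=1$, a contradiction.

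With the intermediate bound in hand, a uniform-convexity argument finishes Cauchyness: if $(Kx_{3n})$ were not Cauchy, pick $l_k>m_k>n_k$ with $G(Kx_{3l_k},Kx_{3m_k},Kx_{3n_k})\geq\varepsilon_0$, then choose $\gamma\in(0,1)$ with $\varepsilon_0/\gamma>G(A,B,C)$ and $\varepsilon>0$ with $\varepsilon<\min\{\varepsilon_0/\gamma-G(A,B,C),\,G(A,B,C)\alpha(\gamma)/(1-\alpha(\gamma))\}$. Lemma~\ref{105} and the intermediate bound place $Kx_{3n_k},Kx_{3m_k},Kx_{3l_k}$ all within $G$-distance $G(A,B,C)+\varepsilon$ of the pair $(Kx_{3n_k+1},Kx_{3n_k+2})$, while their mutual $G$-distance is at least $\gamma(G(A,B,C)+\varepsilon)$. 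By $G$-convexity of $A$ the point $\Bbb{W}(Kx_{3n_k},Kx_{3m_k},Kx_{3l_k};1/3,1/3,1/3)$ lies in $A$, and the uniform convexity hypothesis yields $G(A,B,C)\leq(G(A,B,C)+\varepsilon)(1-\alpha(\gamma))<G(A,B,C)$, the desired contradiction.

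Finally, the tripartite relatively anti-Lipschitzian property of $K$ transfers Cauchyness to $(x_{3n})$, closedness of $A$ inside the complete space $X$ supplies a limit $p\in A$, and since the remark following the definition of tripartite semi-contraction ensures that $G$-continuity of $K$ implies $G$-continuity of $S$, the identification argument at the end of Theorem~\ref{106} applies and delivers $G(Kp,Tp,Sp)=G(A,B,C)$. The full convergence $x_{3n}\to p$ is automatic from the Cauchy argument, so the ``moreover'' clause comes for free. The main obstacle is the intermediate bound in the first step: the mixed expression $G(Tx,Sy,Kz)$ now carrying the contraction factor forces a careful alignment of the three indices with the three roles of $T$, $S$, $K$ at each iteration, and one must check that the chain of three iterations collapses neatly enough to permit plugging in the minimality bound $G(Kx_{3m_k-3},Kx_{3n_k+1},Kx_{3n_k+2})<G(A,B,C)+\varepsilon_0$ to close the self-referential inequality. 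Once this combinatorial matching is confirmed, the uniform-convexity step and the identification of the limit go through essentially verbatim from Theorem~\ref{9}.
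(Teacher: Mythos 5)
Your proposal is correct and follows essentially the same route as the paper's own proof: the same two-stage Cauchy argument (the intermediate bound via the minimality choice of $m_k$ and the semi-contraction inequalities forcing $r=1$, then uniform convexity with the averaged point $\Bbb{W}(\cdot,\cdot,\cdot;\frac13,\frac13,\frac13)$), followed by the anti-Lipschitzian transfer, completeness, and the identification of the limit as in Theorem \ref{106}. Your added remark that $G$-continuity of $K$ supplies the $G$-continuity of $S$ needed in that final step is exactly the right way to justify the paper's "similar argument" reference.
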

\begin{proof}
For $x_{0}\in A$ define $Tx_{n}=Sx_{n+1}=Kx_{n+2}$ for each $n\geq 0$. We prove that $(Kx_{3n})$, $(Kx_{3n+1})$ and $(Kx_{3n+2})$
are $G$-Cauchy sequences. At first, we verify that for each $\varepsilon >0$ there exists $N_{0}\in \Bbb{N}$ such that
\begin{equation*}
G(Kx_{3m},Kx_{3n+1},Kx_{3n+2})< G(A,B,C)+\varepsilon, ~~\forall m>n\geq N_{0}. ~~~~~(*)
\end{equation*}
Assume to the contrary that there exists $\varepsilon_{0}>0$ such that for each $k\geq 1$ there exist $m_{k}>n_{k}\geq k$
satisfying
\begin{align*}
G(Kx_{3m_{k}},Kx_{3n_{k}+1},Kx_{3n_{k}+2})\geq G(A,B,C)+\varepsilon_{0},\end{align*}
and
\begin{align*}
G(Kx_{3m_{k}-3},Kx_{3n_{k}+1},Kx_{3n_{k}+2})< G(A,B,C)+\varepsilon_{0}.
\end{align*}
Now, we have
\begin{flalign*}
G(A,B,C)+\varepsilon_{0}&\leq G(Kx_{3m_{k}},Kx_{3n_{k}+1},Kx_{3n_{k}+2})=G(Sx_{3m_{k}-1},Sx_{3n_{k}},Sx_{3n_{k}+1}) \\
&\leq G(Tx_{3m_{k}-1},Sx_{3n_{k}},Kx_{3n_{k}+1}) \\
&\leq G(Kx_{3m_{k}-1},Kx_{3n_{k}},Kx_{3n_{k}+1})=G(Sx_{3m_{k}-2},Sx_{3n_{k}-1},Sx_{3n_{k}}) \\
&\leq G(Tx_{3m_{k}-2},Sx_{3n_{k}-1},Kx_{3n_{k}}) \\
&\leq G(Kx_{3m_{k}-2},Kx_{3n_{k}-1},Kx_{3n_{k}})=G(Sx_{3m_{k}-3},Sx_{3n_{k}-2},Sx_{3n_{k}-1}) \\
&\leq G(Tx_{3m_{k}-3},Sx_{3n_{k}-2},Kx_{3n_{k}-1}) \\
&\leq G(Kx_{3m_{k}-3},Kx_{3n_{k}-2},Kx_{3n_{k}-1}) \\
&\leq G(Kx_{3m_{k}-3},Kx_{3n_{k}+1},Kx_{3n_{k}+1})+G(Kx_{3n_{k}-2},Kx_{3n_{k}+1},Kx_{3n_{k}+1}) \\
&+G(Kx_{3n_{k}-1},Kx_{3n_{k}+1},Kx_{3n_{k}+1}) \\
&\leq G(Kx_{3m_{k}-3},Kx_{3n_{k}+1},Kx_{3n_{k}+2})+G(Kx_{3n_{k}-2},Kx_{3n_{k}+1},Kx_{3n_{k}+4}) \\
&+ G(Kx_{3n_{k}-1},Kx_{3n_{k}},Kx_{3n_{k}+1}) \\
&= G(Kx_{3m_{k}-3},Kx_{3n_{k}+1},Kx_{3n_{k}+2})+G(Kx_{3n_{k}-2},Kx_{3n_{k}+1},Kx_{3n_{k}+4}) \\
&+ G(Tx_{3n_{k}-3},Tx_{3n_{k}-2},Tx_{3n_{k}-1}).
\end{flalign*}
Letting $k\rightarrow \infty$, and using the hypothesis together with Lemmas \ref{104} and \ref{108} we obtain
\begin{equation*}
G(Kx_{3m_{k}},Kx_{3n_{k}+1},Kx_{3n_{k}+2})\rightarrow G(A,B,C)+\varepsilon_{0}.
\end{equation*}
Besides,
\begin{flalign*}
G(A,B,C)+\varepsilon_{0}&\leq G(Kx_{3m_{k}},Kx_{3n_{k}+1},Kx_{3n_{k}+2})=G(Sx_{3m_{k}-1},Sx_{3n_{k}},Sx_{3n_{k}+1}) \\
&\leq G(Tx_{3m_{k}-1},Sx_{3n_{k}},Kx_{3n_{k}+1}) \\
&\leq G(Kx_{3m_{k}-1},Kx_{3n_{k}},Kx_{3n_{k}+1})=G(Sx_{3m_{k}-2},Sx_{3n_{k}-1},Sx_{3n_{k}}) \\
&\leq G(Tx_{3m_{k}-2},Sx_{3n_{k}-1},Kx_{3n_{k}}) \\
&\leq G(Kx_{3m_{k}-2},Kx_{3n_{k}-1},Kx_{3n_{k}})=G(Sx_{3m_{k}-3},Sx_{3n_{k}-2},Sx_{3n_{k}-1}) \\
&\leq G(Tx_{3m_{k}-3},Sx_{3n_{k}-2},Kx_{3n_{k}-1}) \\
&\leq rG(Kx_{3m_{k}-3},Kx_{3n_{k}-2},Kx_{3n_{k}-1})+(1-r)G(A,B,C).
\end{flalign*}
Letting $k\rightarrow \infty$, we conclude that
\begin{equation*}
G(A,B,C)+\varepsilon_{0}\leq r(G(A,B,C)+\varepsilon_{0})+(1-r)G(A,B,C)\leq G(A,B,C)+\varepsilon_{0}.
\end{equation*}
This implies that $r=1$, which is a contradiction. That is, $(*)$ holds. Similarly, we see that
\begin{equation*}
G(Kx_{3l_{k}},Kx_{3l_{k}+1},Kx_{3l_{k}+2})< G(A,B,C)+\varepsilon_{0}.
\end{equation*}
Now, suppose $(Kx_{3n})$ is not a $G$-Cauchy sequence. Then there exists $\varepsilon_{0}> 0$ such that for each $k\geq 1$ there exist
$l_{k}> m_{k}> n_{k}\geq k$ in such a way that $G(Kx_{3l_{k}},Kx_{3m_{k}},Kx_{3n_{k}})\geq \varepsilon_{0}$. Choose $0< \gamma < 1$
such that $\frac{\varepsilon_{0}}{\gamma}> G(A,B,C)$ and choose $\varepsilon >0$ such that
\begin{equation*}
0< \varepsilon < \min\left \{\frac{\varepsilon_{0}}{\gamma}-G(A,B,C), ~\frac{G(A,B,C)\alpha(\gamma)}{1-\alpha(\gamma)}\right\}.
\end{equation*}
Let $N\in \Bbb{N}$ be such that
\begin{align*}
G(Kx_{3n_{k}},Kx_{3n_{k}+1},Kx_{3n_{k}+2})&\leq G(A,B,C)+\varepsilon, ~~\forall n_{k}\geq N, \\
G(Kx_{3m_{k}},Kx_{3n_{k}+1},Kx_{3n_{k}+2})\leq &G(A,B,C)+\varepsilon, ~~\forall m_{k}> n_{k}\geq N, \\
G(Kx_{3l_{k}},Kx_{3n_{k}+1},Kx_{3n_{k}+2})\leq G&(A,B,C)+\varepsilon, ~~\forall l_{k}> m_{k}> n_{k}\geq N.
\end{align*}
Uniform convexity of $(X,G)$ now implies that
\begin{flalign*}
G(A,B,C)&\leq G(Kx_{3n_{k}+1},Kx_{3n_{k}+2},\Bbb{W}(Kx_{3n_{k}},Kx_{3m_{k}},Kx_{3l_{k}},\frac{1}{3},\frac{1}{3},
\frac{1}{3})) \\
&\leq (G(A,B,C)+\varepsilon)(1-\alpha(\gamma))< G(A,B,C),
\end{flalign*}
which is a contradiction. Therefore, $(Kx_{3n})$ is a $G$-Cauchy sequence in $A$. By the fact that $K$ is tripartite relatively
anti-Lipschitzian on $A\cup B\cup C$, we have
\begin{equation*}
G(x_{3l},x_{3m},x_{3n})\leq c\, G(Kx_{3l},Kx_{3m},Kx_{3n})\rightarrow 0,~~l,m,n\rightarrow \infty,
\end{equation*}
that is, $(x_{3n})$ is $G$-Cauchy. Since $A$ is $G$-complete, there exists $p\in A$ such that $x_{3n}\rightarrow p$. Now, the
result follows from a similar argument used in the proof of Theorem \ref{106}.
\end{proof}
\bibliographystyle{amsplain}

\end{document}